\renewcommand{\phi}{\varphi}
\renewcommand{\epsilon}{\varepsilon}
\newcommand{\xgeq}{\geqslant}
\newcommand{\xleq}{\leqslant}
\newcommand{\zf}[1]{\,\hspace{0ex}{^{#1}}}
\newcommand{\res}{\!\upharpoonright \! }
\newcommand{\id}{\mathrm{id}}
\newcommand{\continuum}{\mathfrak{c}}
\newcommand{\bbR}{\mathbb{R}}
\newcommand{\bbU}{\mathbb{U}}
\newcommand{\calA}{\mathcal{A}}
\newcommand{\calD}{\mathcal{D}}
\newcommand{\calG}{\mathcal{G}}
\newcommand{\calK}{\mathcal{K}}
\newcommand{\calR}{\mathcal{R}}
\newtheorem{theorem}{Theorem}[section]
\newtheorem{lemma}[theorem]{Lemma}
\newtheorem{corollary}[theorem]{Corollary}
\newcounter{lista}
\newcounter{test}
\newcommand{\test}{\refstepcounter{test}\item[(\arabic{test})]}
\newenvironment{lista}[1]{\begin{list}{\upshape{#1}}{\usecounter{lista}\setlength{\rightmargin}{\leftmargin}}}{\end{list}\setcounter{lista}{0}}
\begin{document}

\title[An example of a rigid $\kappa$-superuniversal metric space]{An example of a rigid $\kappa$-superuniversal metric space}

\author[W. Bielas]{Wojciech Bielas}
\address{University of Silesia, Institute of Mathematics,
Bankowa 14, 40-007 Katowice, POLAND}
\email{wbielas@us.edu.pl}

\date{}

\begin{abstract}
For a cardinal $\kappa>\omega$ a metric space $X$ is called to be $\kappa$-su\-per\-uni\-ver\-sal whenever for every metric space $Y$ with $|Y|< \kappa$ every partial isometry from a subset of $Y$ into $X$ can be extended over the whole space $Y$.
Examples of such spaces were given by Hechler [1] and  Kat\v{e}tov [2].
In particular, Kat\v{e}tov showed that if  $\omega<\kappa=\kappa^{<\kappa}$, then there exists a $\kappa$-superuniversal $K$ which is moreover $\kappa$-homogeneous, i.e.\@ every isometry of a subspace $Y\subseteq K$ with $|Y|<\kappa$ can be extended to an isometry of the whole $K$.
In connection of this W.\@ Kubi\'s suggested that there should also exist a $\kappa$-superuniversal space that is not $\kappa$-homogeneous.
In this paper there is shown that for every cardinal $\kappa$ there exists a $\kappa$-superuniversal space which is rigid, i.e.\@ has exactly one isometry, namely the identity.
The construction involves an amalgamation-like property of a family of metric spaces.
\end{abstract}

\maketitle

\section{Introduction}

\setcounter{test}{0}

If for every pair $Y_0\subseteq Y$ of finite metric spaces and isometric embedding $f_0:Y_0\to X$ there is an isometric embedding $f:Y\to X$ such that $f\res Y_0=f_0$, then we say that $X$ is \emph{finitely injective} (\cite{Melleray_1}).
One of the first such spaces was constructed by Urysohn~\cite{Urysohn}.
The example of Urysohn is a Polish metric space $\bbU$, universal in the class of all separable metric spaces and $\omega$\emph{-homogeneous}, which means that every isometry between finite subsets of $\bbU$ has an extension to an isometry 	of the whole $\bbU$.
Urysohn also showed that in the case of separable Polish metric spaces, the universality (in the class of all separable metric spaces) together with the $\omega$-homogeneity are equivalent to the finite injectivity.
These notions have natural generalizations for infinite cardinal numbers.
In~\cite{Hechler} Hechler, for an uncountable cardinal $\kappa$, defined a metric space $X$ to be $\kappa$\emph{-superuniversal} if for every metric space $Y$ of cardinality at most $\kappa$, every isometry from some subset of $Y$ of cardinality less than $\kappa$ into $X$ can be extended to an isometry of the whole $Y$ into $X$.
Hechler showed that for every uncountable regular cardinal $\kappa$ there exists a $\kappa$-superuniversal metric space of cardinality $\sum_{\lambda<\kappa}2^\lambda$ and that such a space is unique up to isometry if and only if $\kappa=\sum_{\lambda<\kappa}2^\lambda$.
For an uncountable cardinal $\kappa$ Kat\v{e}tov showed (\cite{Katetov}) that if $\kappa^{<\kappa}=\kappa$, then up to isometry there is exactly one $\kappa$-homogeneous metric space of weight $\kappa$, universal in the class of all metric spaces of weight $\kappa$.
Every such space is $\kappa$-superuniversal.
If $\kappa>\omega$, then every $\kappa$-superuniversal metric space is complete.
In~\cite{Katetov} Kat\v{e}tov showed that there is an $\omega$-superuniversal and $\omega$-homogeneous metric space of weight $\omega$ which is also meager.
By the back-and-forth argument, every $\kappa$-superuniversal metric space of cardinality $\kappa$ is $\kappa$-homogeneous and universal in the class of all metric spaces of cardinality less than or equal to $\kappa$.
As Kat\v{e}tov noted in~\cite{Katetov}, the existence of a $\kappa$-superuniversal metric space of weight $\kappa$ for an uncountable $\kappa=\kappa^{<\kappa}$ can be deduced from J\'onsson's theorems concerning relational systems~(\cite{Jonsson}).
By \emph{rigid metric space} we mean a metric space which has no nontrivial isometry~(\cite{Janos}).
Obviously, Kat\v{e}tov's example is not rigid.

The main result of the paper is a construction of a $\kappa$-superuniversal metric space which is rigid.

We also prove that for every $\kappa$ and every $\lambda>\kappa$ which is strongly inaccessible there exists a rigid $\kappa$-superuniversal metric space of cardinality $\lambda$.

\section{Amalgamation of metric spaces}

It is easy to define metrics $d_{01}$, $d_{02}$, $d_{12}$ on sets $\{0,1\}$, $\{0,2\}$, $\{1,2\}$, respectively, in such a way that there is no metric $d$ on $\{0,1,2\}$ such that $(\{i,j\},d_{ij})$ is a subspace of $(\{0,1,2\},d)$ for every $i<j\xleq 2$.
In this case points $0,1,2$ can be viewed as vertices constituting a cycle in a graph, which is also an induced cycle, contained in none of spaces $\{i,j\}$.
We introduce a notion of a graph of a family of metric spaces and prove that if metrics are compatible on intersections of their domains and every induced cycle is contained in some element of the family, then there is a common extension of all the members of the family.
We extensively use this method of amalgamation of a family of metric spaces in order to obtain a rigid $\kappa$-superuniversal  metric space.

We follow standard notions of~\cite{Engelking}; for a pseudometric space $(X,d)$ and $Y\subseteq X$ we set $d\res Y=d\res (Y\times Y)$.
We also consider $Y$ to be a pseudometric space with the pseudometric $d\res Y$ and we will say that $X$ is an \emph{extension of} $Y$.

Assume that for every $s\in S$, $d_s$ is a pseudometric on $X_s$.
Let $G$ be a graph with $\bigcup_{s\in S}X_s$ being the set of vertices where $xy$ is an edge in $G$ if there is $s\in S$ such that $x,y\in X_s$.
We call a sequence $z_0z_1,z_1z_2,\ldots,z_{n-1}z_n$ a \emph{path from} $x$ \emph{to} $y$ if $z_0=x$, $z_n=y$ and $z_iz_{i+1}$ is an edge in $G$ for all $i<n$.
A path $z_0z_1,z_1z_2,\ldots,z_{n-1}z_n$ will be denoted also by $z_0\ldots z_n$.

We say that a graph is \emph{connected} if for its every two points $x$ and $y$ there exists a path from $x$ to $y$.
If a path is of the form $z_0\ldots z_nz_0$, then we say that it is a \emph{closed path}.
If $z_0\ldots z_n$ is a closed path and $z_i\neq z_j$ for all $i<j<n$, then we say that $z_0\ldots z_n$ is a \emph{cycle}.
We say that a path in $G$ is \emph{induced} if no two vertices of the path are connected by an edge that does not itself belong to the path.
Observe that if $d_s\res (X_s\cap X_t)=d_t(X_s\cap X_t)$ for every $s,t\in S$, then the following notion is correctly defined:  if $z_0\ldots z_n$ is a path and $s_1,\ldots,s_n$ are such that $z_iz_{i+1}\in X_{s_i}$ for $i<n$, then we call the number
\[
w(z_0\ldots z_n)=\sum_{i=0}^{n-1}d_{s_{i+1}}(z_i,z_{i+1})
\]
the \emph{weight of} $z_0\ldots z_n$.

\begin{lemma}\label{lem:redukcja_sciezki}
Let $G$ be the graph of the family $\{(X_s,d_s):s\in S\}$ of metric spaces such that $d_s\res (X_s\cap X_t)=d_t\res (X_s\cap X_t)$ for all $s,t\in S$, $s\neq t$.
 If $z_0\ldots z_n$ is a path in $G$ and $z_0\neq z_n$, then there exists a subset $\{z_{i_0},z_{i_1},\ldots,z_{i_m}\}$ such that $z_{i_0}=z_0$, $z_{i_m}=z_n$, $z_{i_0}\ldots z_{i_m}$ is a path and $z_{i_k}\neq z_{i_j}$ for $k<j\xleq m$.
 Moreover, $w(z_{i_0}\ldots z_{i_m})\xleq w(z_0\ldots z_n)$.
\end{lemma}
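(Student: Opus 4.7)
The plan is to argue by induction on the length $n$ of the path, iteratively excising loops created by repeated vertices.

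For the inductive step, suppose the path $z_0\ldots z_n$ contains a repeated vertex, say $z_i = z_j$ with $i<j$. I split into two cases. If $j<n$, I delete the intermediate vertices $z_{i+1},\ldots,z_j$ and form the shorter sequence $z_0\ldots z_i z_{j+1}\ldots z_n$. This is still a path in $G$: the only new edge is $z_i z_{j+1}$, and since $z_jz_{j+1}$ is an edge of $G$ witnessed by some $s\in S$ with $z_j,z_{j+1}\in X_s$, the equality $z_i=z_j$ yields $z_i,z_{j+1}\in X_s$, so $z_iz_{j+1}$ is also an edge. If $j=n$ instead, then $z_i=z_n$, and since $z_0\neq z_n$ we have $i\geq 1$, so $z_0\ldots z_i$ is a path from $z_0$ to $z_n$. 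In either case the number of vertices strictly decreases, so finitely many such deletions yield a path with no repetitions, which gives the required subsequence $z_{i_0},\ldots,z_{i_m}$ with $z_{i_0}=z_0$, $z_{i_m}=z_n$, and all terms pairwise distinct.

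It remains to check that the weight does not increase under a single excision. Label the edges of the original path by indices $s_1,\ldots,s_n$ with $z_kz_{k+1}\in X_{s_{k+1}}$. In the first case, the deleted edges $z_kz_{k+1}$ for $i\xleq k<j$ contribute the nonnegative sum $\sum_{k=i}^{j-1}d_{s_{k+1}}(z_k,z_{k+1})$, while the single new edge $z_iz_{j+1}$ may be labeled by $s_{j+1}$ (since $z_i,z_{j+1}\in X_{s_{j+1}}$) and contributes exactly $d_{s_{j+1}}(z_j,z_{j+1})$, identical to its contribution in the original path. In the second case, we simply discard a tail of nonnegative contributions. The compatibility hypothesis $d_s\res(X_s\cap X_t)=d_t\res(X_s\cap X_t)$ ensures that these weights are independent of the choice of indexing $s_i$, so the definition of weight is unambiguous throughout the procedure.

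I do not expect a serious obstacle here: the argument is essentially a finite combinatorial loop-removal, and the only substantive point is that the well-definedness of the weight, guaranteed by compatibility, allows the labels $s_i$ to be transported to the shortened path without ambiguity.
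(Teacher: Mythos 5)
Your proof is correct and rests on the same elementary idea as the paper's: repeatedly excise the loop between a repeated pair of vertices, observing that the new edge $z_iz_{j+1}$ is witnessed by the same $X_s$ as $z_jz_{j+1}$ and, since $z_i=z_j$, carries exactly the same weight, so each excision can only decrease the total weight. The paper organizes this as an induction on the path length with a case split on whether $z_0=z_n$, whereas you excise an arbitrary loop and terminate because the vertex count strictly decreases; this is only a difference in bookkeeping, not a different route.
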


\begin{proof}
Observe that for $n=1$ our claim holds.

Fix $n<\omega$ and assume that our claim holds for every path $z_0\ldots z_k$ for all $k\xleq n$.
 Fix a path $z_0\ldots z_nz_{n+1}$ and assume that $z_0\neq z_{n+1}$ and there are $i,j\xleq n+1$ such that $z_i=z_j$.
 Consider the case $z_0=z_n$.
 Then $\{z_0,z_{n+1}\}$ is a subset of distinct vertices such that $z_0\ldots z_{n+1}$ is a path and
 \begin{multline*}
w(z_0z_{n+1})=w(z_nz_{n+1})=d_{s_{n+1}}(z_n,z_{n+1})\xleq\\
\sum_{i=0}^nd_{s_{i+1}}(z_i,z_{i+1})=w(z_0\ldots z_nz_{n+1})
 \end{multline*}
 for some $s_1,\ldots,s_{n+1}\in S$ such that $z_i,z_{i+1}\in X_{s_{i+1}}$ for all $i\xleq n$.

 Consider the case $z_0\neq z_n$.
If vertices of the path $z_0\ldots z_n$ are not distinct, then from the induction hypothesis there exists $\{z_{i_0},\ldots,z_{i_m}\}\subseteq\{z_0\ldots z_n\}$ such that $z_{i_0}=z_0$, $z_{i_m}=z_n$, vertices of $z_{i_0}\ldots z_{i_m}$ are  distinct and $w(z_{i_0}\ldots z_{i_m})\xleq w(z_0\ldots z_n)$.
Observe that $m<n$, hence $z_{i_0}\ldots z_{i_m}z_{n+1}$ can be written as $w_0\ldots w_k$ for some $k\xleq n$, thus from the induction hypothesis there exists $\{z_{j_0},\ldots,z_{j_k}\}\subseteq \{z_{i_0},\ldots,z_{i_m},z_{n+1}\}$ such that $z_{j_0}=z_{i_0}$, $z_{j_k}=z_{n+1}$, vertices of $z_{j_0}\ldots z_{j_k}$ are  distinct and $w(z_{j_0}\ldots z_{j_k})\xleq w(z_{i_0}\ldots z_{i_m}z_{n+1})$.
Thus
\begin{multline*}
 w(z_{j_0}\ldots z_{j_k})\xleq w(z_{i_0}\ldots z_{i_m}z_{n+1})=w(z_{i_0}\ldots z_{i_m})+d_{s}(z_{i_m},z_{n+1})\xleq\\
 w(z_0\ldots z_n)+d_{s}(z_{i_m},z_{n+1})=w(z_0\ldots z_{n+1})
\end{multline*}
for some $s\in S$ such that $z_{i_m},z_{n+1}\in X_s$.
If vertices of $z_0\ldots z_n$ are  distinct, then vertices of $z_1\ldots z_{n+1}$ are not distinct and we proceed analogously to the previous setting.
\end{proof}

If $d$ is a pseudometric on $X$, then the graph of a family $\{(X,d)\}$ is a complete graph, hence $xz_0\ldots z_ny$ is a path from $x$ to $y$ for any $x,y,z_0,\ldots,z_n\in X$.
Thus we can define
\[
d(x,Z_1,\ldots,Z_n,y)=\inf\{w(xz_1\ldots z_ny):(z_1,\ldots,z_n)\in Z_1\times\ldots\times Z_n\},
\]
\[
 d(x,Z)=\inf\{d(x,z):z\in Z\},
\]
for $x,y\in X$ and $Z,Z_1,\ldots,Z_n\subseteq X$.

The fact that the class of all metric spaces has the amalgamation property is attributed, by Morley and Vaught, to Sierpi\'nski~\cite{Hung}.
The following theorem gives the amalgam of an arbitrary large family of pseudometric spaces.

\begin{theorem}[Amalgamation lemma]\label{thm:pseudometric_graph}
Assume that $\{(X_s,d_s):s\in S\}$ is a family of pseudometric spaces with connected graph $G$ such that
\begin{lista}{(\roman{lista})}
 \item $d_s\res (X_s\cap X_t)=d_t\res(X_s\cap X_t)$ for all $s,t\in S$,
 \item if $x_1\ldots x_n$ is an induced cycle in $G$, then there is $s\in S$ such that $x_1,\ldots,x_n\in X_s$.
\end{lista}
Then there is a pseudometric $\rho$ on the set $\bigcup_{s\in S}X_s$ such that $\rho\res X_s=d_s$ for every $s\in S$.

Moreover, if there is $s_0\in S$ such that $X_s\cap X_{s_0}\neq\emptyset$ and $X_s\cap X_{t}\subseteq X_{s_0}$ for all $s\neq t$, then for all $s\neq t$, $x\in X_s$, $y\in X_t$, $w\in X_{s_0}\cup X_t$:
  \[
   \rho(x,y)=\rho(x,X_s\cap X_{s_0},X_t\cap X_{s_0},y),
  \]
  \[
   \rho(x,w)=\rho(x,X_s\cap X_{s_0},w).
  \]
\end{theorem}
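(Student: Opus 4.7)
The plan is to define, for $x,y\in\bigcup_{s\in S}X_s$,
\[
\rho(x,y)=\inf\bigl\{w(z_0z_1\ldots z_n):z_0=x,\ z_n=y,\ z_0\ldots z_n\text{ is a path in }G\bigr\},
\]
which is a well-defined nonnegative real because $G$ is connected. The pseudometric axioms for $\rho$ are routine: $\rho(x,x)=0$ from the trivial one-vertex path; symmetry follows from reversing a path, using hypothesis (i) to see that the weight is unchanged; the triangle inequality follows by concatenating paths. The easy half of $\rho\res X_s=d_s$, namely $\rho(x,y)\xleq d_s(x,y)$ for $x,y\in X_s$, is immediate from the single-edge path $xy$.

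The reverse inequality is the heart of the proof, for which I would establish the following key claim: for every simple cycle $z_0z_1\ldots z_n$ in $G$ (with $n\xgeq 2$) and every edge $z_kz_{k+1}$ of it (indices taken modulo $n+1$), the weight of that edge does not exceed the sum of weights of the remaining $n$ edges. I would prove this by induction on the number $n+1$ of vertices. The base case $n=2$ is a triangle, which is automatically induced, so (ii) places $z_0,z_1,z_2$ in a common $X_{s^*}$ and the claim reduces to the triangle inequality in $(X_{s^*},d_{s^*})$, with (i) identifying edge weights with the corresponding $d_{s^*}$-values. For the inductive step, if the cycle is induced then (ii) again yields a common $X_{s^*}$ and an iterated triangle inequality suffices; otherwise there is a chord $z_iz_j$, which splits the cycle into two simple cycles of strictly fewer vertices, and applying the inductive hypothesis to both (using the chord as the distinguished edge once and the target edge the other time) adds to the required bound. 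Granted the key claim, the hard direction follows: any path from $x$ to $y$ with $x,y\in X_s$ reduces by Lemma~\ref{lem:redukcja_sciezki} to a simple path of no greater weight, which together with the edge $yx\in X_s$ forms a simple cycle, so the key claim bounds $d_s(x,y)$ by that path's weight.

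For the moreover part, the upper bound $\rho(x,y)\xleq d_s(x,a)+d_{s_0}(a,b)+d_t(b,y)$ for any $a\in X_s\cap X_{s_0}$, $b\in X_t\cap X_{s_0}$ follows from the triangle inequality for $\rho$ combined with the already-established identities $\rho\res X_r=d_r$ for $r\in\{s,s_0,t\}$. For the lower bound I would take an arbitrary simple path $z_0\ldots z_n$ from $x\in X_s$ to $y\in X_t$ and extract the hub vertices as follows. The key structural observation is that whenever two consecutive edges of the path carry labels $s'\neq s''$, their shared vertex lies in $X_{s'}\cap X_{s''}\subseteq X_{s_0}$. Setting $i^*=\max\{i:z_i\in X_s\}$ and, in the generic case, $j^*=\min\{j>i^*:z_j\in X_t\}$, the edge $z_{i^*}z_{i^*+1}$ must have label different from $s$, forcing $a:=z_{i^*}\in X_s\cap X_{s_0}$, and symmetrically $b:=z_{j^*}\in X_t\cap X_{s_0}$; splitting the path at $i^*$ and $j^*$ and applying $\rho\res X_r=d_r$ to each of the three pieces yields the desired lower bound. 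The companion formula $\rho(x,w)=\rho(x,X_s\cap X_{s_0},w)$ is handled by the analogous but simpler one-hub-vertex argument. The technical heart is the chord-splitting induction in the key claim; the moreover part then requires only routine case bookkeeping for the degenerate situations (for instance when $x\in X_s\cap X_{s_0}$ or $y\in X_s\cap X_t$).
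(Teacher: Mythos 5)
Your proposal is correct and follows essentially the same route as the paper: $\rho$ is the infimum of path weights, the hard inequality is proved by closing a simple path into a cycle and doing a chord-splitting induction that invokes hypothesis (ii) exactly when the cycle is induced, and the \emph{moreover} part is obtained by locating hub vertices in $X_s\cap X_{s_0}$ and $X_t\cap X_{s_0}$ on an arbitrary path. Your packaging of the key step as a symmetric cycle inequality (one edge bounded by the sum of the rest) and your direct extraction of the hubs via $i^*=\max\{i:z_i\in X_s\}$ are only cosmetic variants of the paper's induction on path length and its $\max$/$\min$ argument over initial and terminal segments in $X_s\setminus X_{s_0}$ and $X_t\setminus X_{s_0}$.
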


\begin{proof}
Fix $x,y\in \bigcup_{s\in S}X_s$.
Since $G$ is connected there is a path $z_0\ldots z_n$ from $x$ to $y$, i.e.\@ $z_0=x$, $z_n=y$ and for each $i<n$ there is $s_i\in S$ such that $z_i,z_{i+1}\in X_{s_{i+1}}$.
Thus we define
\[
\rho(x,y)=\inf\{w(z_0\ldots z_n):z_0\ldots z_n\mbox{ is a path from }x\mbox{ to }y\}.
\] 
It is easy to see that $\rho$ is symmetric.
Fix $x,y,z\in\bigcup_{s\in S}X_s$.
Let $y_0\ldots y_n$ be a path from $x$ to $y$, and let $z_0\ldots z_m$ be a path from $y$ to $z$.
Then $y_0\ldots y_nz_1\ldots z_m$ is a path from $x$ to $z$.
Thus $\rho(x,z)\xleq w(y_0\ldots y_nz_1\ldots z_m)$.
Since paths $y_0\ldots y_n$ and $z_0\ldots z_m$ have been taken arbitrarily, we have $\rho(x,z)\xleq \rho(x,y)+\rho(y,z)$.
Obviously $\rho(x,y)\xleq d_s(x,y)$ for $x,y\in X_s$.
Fix $x,y\in X_s$ and a path $z_0z_1z_2$ from $x$ to $y$.
If $z_0=z_2$, then $x=y$ and $d_s(x,y)=0\xleq \rho(x,y)$.
If $z_0\neq z_2$ and $|\{z_0,z_1,z_2\}|\xleq 2$, then $\{z_0,z_1,z_2\}\subseteq X_s$ and
\[
 d_s(x,y)\xleq d_s(x,z_1)+d_s(z_1,y)=w(z_0z_1z_2).
\]
Consider the case $|\{z_0,z_1,z_2\}|=3$.
Since $z_0z_1z_2z_0$ is an induced cycle in $G$, there is $t\in S$ such that $z_0,z_1,z_2\in X_t$.
Then
\[
d_s(x,y)=d_t(x,y)\xleq d_t(z_0,z_1)+d_t(z_1,z_2)=w(z_0z_1z_2).
\]
Fix $2\xleq n<\omega$ and assume that $d_s(x,y)\xleq w(z_0 \ldots z_k)$ for all $2\xleq k< n$, $s\in S$, $x,y\in X_s$ and a path $z_0\ldots z_k$ from $x$ to $y$.
Fix $x,y\in X_s$ and a path $z_0\ldots z_n$ from $x$ to $y$.
By Lemma \ref{lem:redukcja_sciezki} we can assume that $z_0\ldots z_nz_0$ is a cycle.
If $z_0\ldots z_nz_0$ is an induced cycle, then the situation is analogous to the case $n=2$.
Thus assume that $z_0\ldots z_nz_0$ is not an induced cycle.
There is an edge $z_iz_j$ in $G$ which is not an edge in $z_0\ldots z_nz_0$.
We can assume that $i<j\xleq n$, hence $1<i+1<j$  or $i+1<j<n$.
There is $t\in S$ such that $z_i,z_j\in X_t$, hence $z_0\ldots z_{i-1}z_iz_jz_{j+1}\ldots z_n$ is a path from $x$ to $y$ of length $i+1+(n-j)<j+(n-j)=n$, and $z_iz_{i+1}\ldots z_{j-1}z_j$ is a path from $z_i$ to $z_j$ of length $j-i<n$, since $i>0$ or $j<n$.
From the induction hypothesis we have $d_s(x,y)\xleq w(z_0\ldots z_iz_j\ldots z_n)$ and $d_t(z_i,z_j)\xleq w(z_iz_{i+1}\ldots z_{j-1}z_j)$.
Thus
\begin{multline*}
 d_s(x,y)\xleq w(z_0\ldots z_iz_j\ldots z_n)=w(z_0\ldots z_i)+d_t(z_i,z_j)+w(z_j\ldots z_n)\xleq\\
 w(z_0\ldots z_i)+w(z_iz_{i+1}\ldots z_{j-1} z_j)+w(z_j\ldots z_n)=w(z_0\ldots z_n).
\end{multline*}
This shows that $d_s(x,y)\xleq \rho(x,y)$, hence $d_s(x,y)=\rho(x,y)$ for all $x,y\in X_s$.

Assume that $s_0\in S$ is such that $X_s\cap X_{s_0}\neq\emptyset$ and $X_s\cap X_t\subseteq X_{s_0}$ for every $s\neq t$.
Fix $s\neq t$, $x\in X_s$ and $y\in X_t$.
Obviously $\rho(x,y)\xleq \rho(x,X_s\cap X_{s_0},X_t\cap X_{s_0},y)$.
If $x,y\in X_{s_0}$, then $(x,y)\in (X_s\cap X_{s_0})\times (X_t\cap X_{s_0})$, hence
\[
 \rho(x,y)=w(xxyy)\xgeq \rho(x,X_s\cap X_{s_0},X_t\cap X_{s_0},y).
\]
If $x=y$, then $x,y\in X_t\cap X_s\subseteq X_{s_0}$, hence it is the previous case.
Consider the case $x\neq y$ and fix a path $z_0\ldots z_n$ from $x$ to $y$.
By Lemma \ref{lem:redukcja_sciezki} we can assume that $z_0\ldots z_nz_0$ is a cycle.
We will show that there are $i,j\xleq n$ such that $z_i\in X_s\cap X_{s_0}$ and $z_j\in X_t\cap X_{s_0}$.
Consider the case $x\notin X_{s_0}$ and $y\in X_{s_0}$.
Suppose that there is no $i$ such that $z_i\in X_s\cap X_{s_0}$.
Let
\[
 i=\max\{r:z_0,\ldots,z_r\in X_s\setminus X_{s_0}\}.
\]
Since $z_n=y\in X_{s_0}$, we have $i<n$.
Thus $z_{i+1}\in X_s\setminus X_{s_0}$, hence $z_{i+1}\notin X_s$ or $z_{i+1}\in X_{s_0}$.
If $z_{i+1}\in X_s$, then $z_{i+1}\in X_{s_0}$ and $z_{i+1}\in X_s\cap X_{s_0}$, contrary to our assumption.
Thus $z_{i+1}\notin X_s$.
Since $z_iz_{i+1}$ is an edge, there is $p\in S$ such that $z_i,z_{i+1}\in X_p$.
Since $z_{i+1}\notin X_s$, we have $s\neq p$.
Then $X_s\cap X_p\subseteq X_{s_0}$.
From the definition of $i$ we have $z_i\in X_s\setminus X_{s_0}$, hence $z_i\in X_p\cap X_s\subseteq X_{s_0}$, a contradiction.

The case $x\in X_{s_0}$ and $y\notin X_{s_0}$ is analogous to the previous one.

Consider the case $x\notin X_{s_0}$ and $y\notin X_{s_0}$.
Suppose that there are no $i,j\xleq n$ such that $z_i\in X_s\cap X_{s_0}$ and $z_j\in X_t\cap X_{s_0}$.
Then
\begin{itemize}
 \item[$(*)$] $z_i\notin X_s\cap X_{s_0}$ or $z_j\notin X_t\cap X_{s_0}$ for each $i,j\xleq n$.
\end{itemize}
Since $z_0=x\in X_s\setminus X_{s_0}$ and $z_n=y\in X_t\setminus X_{s_0}$, the following numbers are correctly defined:
\[
 i=\max\{r:z_0,\ldots,z_r\in X_s\setminus X_{s_0}\}\text{ and }j=\min\{r:z_r,\ldots,z_n\in X_t\setminus X_{s_0}\}.
\]
Since $X_s\cap X_t\subseteq X_{s_0}$, we have $i<j$.
Thus
\[
 z_{i+1}\notin X_s\setminus X_{s_0}\text{ and }z_{j-1}\notin X_t\setminus X_{s_0}.
\]
By $(*)$ we have
\[
 z_{i+1}\notin X_s\cap X_{s_0}\text{ or }z_{j-1}\notin X_t\cap X_{s_0}.
\]
If $z_{i+1}\notin X_s\cap X_{s_0}$, then $z_{i+1}\notin X_s$.
Since $z_iz_{i+1}$ is an edge, there is $p\in S$ such that $z_i,z_{i+1}\in X_p$.
Since $z_{i+1}\notin X_s$, we have $p\neq s$, hence $X_s\cap X_p\subseteq X_{s_0}$.
By the definition of $i$ we have that $z_i\in X_s\setminus X_{s_0}$, hence $z_i\in X_s\cap X_p\subseteq X_{s_0}$, a contradiction.
If $z_{j-1}\notin X_t\cap X_{s_0}$, then an analogous argument provides a contradiction.
Thus there are $i,j\xleq n$ such that $z_i\in X_s\cap X_{s_0}$ and $z_j\in X_t\cap X_{s_0}$.
Since $z_0\ldots z_n$ is a path there are $s_1,\ldots,s_n\in S$ such that $z_k,z_{k+1}\in X_{s_{k+1}}$ for each $k<n$.
Then
\begin{multline*}
 \rho(x,z_i)+\rho(z_i,z_j)+\rho(z_j,y)\xleq\\ \sum_{k=0}^{i-1}\rho(z_k,z_{k+1})+\sum_{k=i}^{j-1}\rho(z_k,z_{k+1})+\sum_{k=j}^{n-1}\rho(z_k,z_{k+1})=\\
 \sum_{k=0}^{n-1}\rho(z_k,z_{k+1})=\sum_{k=0}^{n-1}d_{s_{k+1}}(z_k,z_{k+1})=w(z_0\ldots z_n).
\end{multline*}
We prove analogously that $\rho(x,w)=\rho(x,X_s\cap X_{s_0},w)$ for every $x\in X_s$ and $w\in X_{s_0}\cup X_t$.
\end{proof}

We obtain the theorem which in particular gives a hedgehog space.

\begin{theorem}[Main theorem]\label{thm:amalgamacja}
Assume that $s_0\in S$ and $\{(X_s,d_s):s\in S\}$ is a family of metric spaces such that for all $s,t\in S$:
\begin{lista}{(\roman{lista})}
\item $X_{s_0}\cap X_s\neq\emptyset$,
\item $X_s\cap X_t\subseteq X_{s_0}$ whenever $s\neq t$,
\item $d_{s_0}\res (X_{s_0}\cap X_s)=d_s\res (X_{s_0}\cap X_s)$.
\end{lista}
Then there exists a metric space $(Y,d)$ such that
\begin{lista}{(\roman{lista})}\addtocounter{lista}{3}
 \item $X_{s_0}$ is a subspace of $Y$,
\item for every $s\in S\setminus\{s_0\}$ there is an isometric embedding $i_s:X_s\to Y$ such that $i_s\res (X_{s_0}\cap X_s)=\id_{X_{s_0}\cap X_s}$, 
\item $Y\subseteq \bigcup_{s\in S}X_s$,
\item for all $s\neq t$, if $x\in i_s[X_s]$, $y\in i_t[X_t]$, $z\in X_{s_0}\cup i_t[X_t]$, then
\[d(x,y)=d(x,X_{s_0}\cap X_s,X_{s_0}\cap X_t,y),\]
\[d(x,z)=d(x,X_{s_0}\cap X_s,z).\]
\end{lista}
Moreover, if there exists $\delta>0$ such that $d_s(x,y)\xgeq \delta$ for every $s\in S\setminus\{s_0\}$, $x\in X_s\setminus X_{s_0}$ and $y\in X_{s_0}$, then $Y=\bigcup_{s\in S}X_s$ and $i_s=\id_{X_s}$ for every $s\in S\setminus \{s_0\}$.
\end{theorem}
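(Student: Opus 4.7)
The plan is to apply Theorem~\ref{thm:pseudometric_graph} to the family $\{(X_s,d_s):s\in S\}$, obtain a pseudometric $\rho$ on $U=\bigcup_{s\in S}X_s$, and then pass to a genuine metric by selecting representatives of $\rho$-equivalence classes. The graph $G$ of the family is connected because by (i) each $X_s$ meets $X_{s_0}$, so any two vertices are joined through $X_{s_0}$. Hypothesis (i) of Theorem~\ref{thm:pseudometric_graph} is immediate: for $s\neq t$, (ii) puts $X_s\cap X_t\subseteq X_{s_0}$, and then (iii) forces both $d_s$ and $d_t$ to agree with $d_{s_0}$ on that intersection.

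The main obstacle is verifying hypothesis (ii) of Theorem~\ref{thm:pseudometric_graph}: every induced cycle of $G$ must lie in some $X_s$. Here is the argument I would give. Let $x_0\ldots x_{n-1}x_0$ be an induced cycle. For every $s\in S$ the set $\{i:x_i\in X_s\}$ is a consecutive arc in the cyclic order, since two non-adjacent indices lying in the same $X_s$ would produce a chord. By (ii) any vertex outside $X_{s_0}$ lies in exactly one branch $X_s$, so both of its cycle-neighbours must lie in that same $X_s$. Consequently the non-central vertices form one arc, all contained in a single branch $X_s$. The complementary central arc contains at most two vertices, since three or more would place two non-adjacent vertices in $X_{s_0}$ and again create a chord; and the at most two central endpoints of that arc are themselves in $X_s$, being cycle-neighbours of non-central $X_s$-vertices. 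Hence the whole cycle sits in $X_s$; in the degenerate case where no non-central vertex appears, it sits in $X_{s_0}$.

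Theorem~\ref{thm:pseudometric_graph} then yields a pseudometric $\rho$ on $U$ with $\rho\res X_s=d_s$ for every $s$, together with the two distance formulas that will supply (vii). To obtain a metric I would define $Y$ to consist of $X_{s_0}$ together with one representative from each $\rho$-equivalence class disjoint from $X_{s_0}$, and set $d=\rho\res Y$. Since $\rho\res X_{s_0}=d_{s_0}$ is a metric, no two distinct points of $X_{s_0}$ are identified, so $X_{s_0}$ embeds as a subspace of $Y$, giving (iv). The map sending $x\in X_s$ to the chosen representative of its $\rho$-class is then an isometric embedding $i_s\colon X_s\to Y$ fixing $X_s\cap X_{s_0}$ pointwise, yielding (v) and (vi), while (vii) is inherited from the moreover clause of Theorem~\ref{thm:pseudometric_graph}. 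For the final ``moreover'' statement, the distance formulas in (vii) force $\rho(x,y)\xgeq\delta$ whenever $x\in X_s\setminus X_{s_0}$ and $y\neq x$, so $\rho$ is already a metric, no identifications take place, and one may take $Y=U$ with each $i_s=\id_{X_s}$.
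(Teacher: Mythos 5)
Your proposal is correct and shares the paper's overall architecture: verify the hypotheses of Theorem~\ref{thm:pseudometric_graph}, obtain the pseudometric $\rho$ on $\bigcup_{s\in S}X_s$, and pass to a transversal of the $\rho$-equivalence classes that contains $X_{s_0}$, defining $i_s$ via representatives. Where you genuinely diverge is in the crux, the verification that every induced cycle lies in a single $X_s$. The paper argues by contradiction, fixing a bad induced cycle, introducing the indices $j=\min\{r\xleq i:z_r,\ldots,z_i\in X_s\setminus X_{s_0}\}$, $k=\max\{\ldots\}$, $m=\sup\{\ldots\}$, and working through several cases to manufacture a forbidden chord. Your argument is direct and structural: the trace of each $X_u$ on an induced cycle is a set of pairwise cycle-adjacent vertices (hence, once the cycle has length at least four, an arc of at most two vertices), each vertex outside the hub $X_{s_0}$ lies in a unique branch and pulls both its cycle-neighbours into that branch, so the non-hub vertices occupy one branch and the at most two hub vertices are absorbed into it. This is shorter and more transparent than the paper's case chase, and it exposes the stronger fact that induced cycles in such a graph have length at most four; the paper's version, on the other hand, needs no separate discussion of the triangle case, which your chord argument technically does not cover (though triangles are handled trivially by the same propagation step). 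One small point to tighten in the ``moreover'' clause: your assertion that $\rho(x,y)\xgeq\delta$ whenever $x\in X_s\setminus X_{s_0}$ and $y\neq x$ fails when $y\in X_s\setminus X_{s_0}$ as well, since there $\rho(x,y)=d_s(x,y)$ may be small; but in that case $d_s$ is a metric and $\rho(x,y)>0$ anyway, so no identification occurs and $Y=\bigcup_{s\in S}X_s$ still follows. It is cleaner to state the dichotomy explicitly: either $x$ and $y$ lie in a common $X_s$, where $\rho$ restricts to the metric $d_s$, or they do not, where the formulas of (vii) give $\rho(x,y)\xgeq\delta$.
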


\begin{proof}
Suppose that the graph of $\{(X_s,d_s):s\in S\}$ does not satisfy condition (ii) of Theorem \ref{thm:pseudometric_graph}.
Then there exists an induced cycle $z_0\ldots z_nz_0$ such that $\{z_0,\ldots,z_n\}\nsubseteq X_s$ for each $s\in S$.
In particular, $\{z_0,\ldots,z_n\}\nsubseteq X_{s_0}$, hence there exists $i$ such that $z_i\notin X_{s_0}$.
There is $s\in S$ such that $z_i\in X_s$.
Let
\[
 j=\min\{r\xleq i:z_r,\ldots,z_i\in X_s\setminus X_{s_0}\},
\]
\[
 k=\max\{r\xgeq i:z_i,\ldots,z_r\in X_s\setminus X_{s_0}\},
\]
\[
 m=\sup\{r\xgeq 0:z_0,\ldots,z_r\in X_s\setminus X_{s_0}\}.
\]
We will show that if $j>0$, then $z_{j-1}\in X_s\cap X_{s_0}$.
Assume that $j>0$.
Then $z_{j-1}\notin X_s\setminus X_{s_0}$, hence $z_{j-1}\notin X_s$ or $z_{j-1}\in X_{s_0}$.
There exists $t\in S$ such that $z_{j-1},z_j\in X_t$.
If $z_{j-1}\notin X_s$, then $t\neq s$ and $z_j\in X_s\cap X_t\subseteq X_{s_0}$, contrary to the definition of $j$.
Thus $z_{j-1}\in X_s\cap X_{s_0}$.
It can be shown analogously that:
\begin{itemize}
 \item[(a)] if $k<n$, then $z_{k+1}\in X_s\cap X_{s_0}$,
 \item[(b)] if $z_0\in X_s\setminus X_{s_0}$ and $m<n$, then $z_{m+1}\in X_s\cap X_{s_0}$.
\end{itemize}

If $j=0$ and $k=n$, then $\{z_0,\ldots,z_n\}\subseteq X_s$, a contradiction.
Thus $j>0$ or $k<n$.
Consider the case $j>0$ and $k<n$.
Then $z_{j-1},z_{k+1}\in X_s\cap X_{s_0}$.
Thus $z_{j-1}z_{k+1}$ is an edge.
Since $z_0\ldots z_nz_0$ is an induced cycle and $j\xleq k$, we have $j-1=0$ and $k+1=n$, hence $\{z_0,\ldots,z_n\}\subseteq X_s$, a contradiction.
Consider the case $j>0$ and $k=n$.
Then $\{z_{j-1},\ldots,z_n\}\subseteq X_s$.
There is $p\in S$ such that $z_n,z_0\in X_p$.
If $p\neq s$, then $z_k=z_n\in X_s\cap X_p\subseteq X_{s_0}$, contrary to the definition of $k$.
Thus $z_0\in X_s$.
If $z_0\in X_{s_0}$, then $z_0z_{j-1}$ is an edge, hence $j=1$ and $\{z_0,\ldots,z_n\}\subseteq X_s$, a contradiction.
Thus $z_0\in X_s\setminus X_{s_0}$.
Then $m<j-1$ since $\{z_{j-1},\ldots,z_n\}\subseteq X_s$.
By (b), $z_{m+1}\in X_s\cap X_{s_0}$.
Then $z_{m+1}z_n$ is an edge, hence $m+1=n-1$.
Thus $\{z_0,\ldots,z_n\}=\{z_0,\ldots,z_{m+1},z_n\}\subseteq X_s$, a contradiction.
Similar arguments apply to the case $j=0$ and $k<n$.

We have shown that the graph of $\{(X_s,d_s):s\in S\}$ satisfies condition (ii) from Theorem \ref{thm:pseudometric_graph}, thus there is an appropriate pseudometric $\rho$ on the set $\bigcup_{s\in S}X_s$.
It is known that the relation $\sim$, given by the formula
\[
 x\sim y\Leftrightarrow \rho(x,y)=0,
\]
is an equivalence on $\bigcup_{s\in S}X_s$.
Let $Y$ be a transversal of $\{[x]:x\in\bigcup_{s\in S}X_s\}$ such that $X_{s_0}\subseteq Y$, where $[x]$ is an equivalence class of $x$.
Then $(Y,\rho)$ is a metric space.
For $s\in S$ and $x\in X_s$ we define $i_s:X_s\to Y$, $i_s(x)\in[x]\cap Y$.

Assume that there is $\delta>0$ such that $d_s(x,y)\xgeq \delta$ for every $s\in S\setminus\{s_0\}$, $x\in X_s\setminus X_{s_0}$ and $y\in X_{s_0}$.
If $\rho(x,y)=0$, then there is $s\in S$ such that $x,y\in X_s$, since otherwise $\rho(x,y)\xgeq 2\delta$.
Then $0=\rho(x,y)=d_s(x,y)$, hence $x=y$.
\end{proof}
We will call the space $Y$ described above the \emph{amalgam of the family} $\{X_s:s\in S\}$.

\section{The isometric extension}

Fix a metric space $X$ and a cardinal $\kappa$.
We also fix a set $A$ of cardinality $\lambda=(|X|+\continuum)^{<\kappa}$, disjoint with $X$, and a partition $\{A_\alpha:\alpha<\lambda\}\subseteq [A]^\lambda$  of the set $A$.
Let us enumerate (we allow repeating elements)
\[
[X]^{<\kappa}=\{X_\alpha:\alpha<\lambda\}\quad\text{ and }\quad  A_\alpha=\{a_{\alpha,\beta}:\beta<\lambda\}. 
\]
If  $Z$ is a set of cardinality less than $\kappa$, then
\[
|\{\rho\in \zf{Z\times Z}\bbR:\rho\mbox{ is a metric on }Z\}|\xleq \continuum^{<\kappa}\xleq\lambda. 
\]
Thus there exist a family $\{d_{\alpha,\beta}:\beta<\lambda\}$ of metrics on sets from the family $\{X_\alpha\cup \{a_{\alpha,\beta}\}:\beta<\lambda\}$, such that  for every $\alpha<\lambda$:
\begin{itemize}{}
\test $d_{\alpha,\beta}$ is a metric on $X_\alpha\cup\{a_{\alpha,\beta}\}$ for every $\beta<\lambda$,
\test $d\res X_\alpha=d_{\alpha,\beta}\res X_\alpha$ for every $\beta<\lambda$,
\test if $Y$ is {a} metric space and {if} $f_0:Y\setminus\{y\}\to X_\alpha$ is an isometry, then there exists $\beta<\lambda$ and  {an} isometry $f:Y\to X_\alpha\cup\{a_{\alpha,\beta}\}$ such that $f\res (Y\setminus \{y\})=f_0$.\label{wlasnosc_a_alpha_beta}
\end{itemize}
Let $\calR=\{X\}\cup\{X_\alpha\cup\{a_{\alpha,\beta}\}:\alpha,\beta<\lambda\}$.
Observe that with $X_{s_0}=X$ the family $\calR$ satisfies assumptions of Theorem \ref{thm:amalgamacja}, hence there exists an amalgam $(Y,d)$ of $\calR$ such that:
\begin{itemize}{}
 \test $X$ is a subspace of $Y$,
 \test $Y\subseteq \bigcup\{X_\alpha\cup\{a_{\alpha,\beta}\}:\alpha,\beta<\lambda\}$,
 \test for all $\alpha,\beta<\lambda$ there exists an isometric embedding $i_{\alpha,\beta}:X_\alpha\cup\{a_{\alpha,\beta}\}\to Y$ such that $i_{\alpha,\beta}\res X_\alpha=\id_{X_\alpha}$,
 \test for all $y\in Y$ there exists $\alpha<\lambda$ such that for all $x\in X$:
 \[
  d(y,x)=d(y,X_\alpha,x).
 \]
\end{itemize}
The amalgam of the family $\calR$ will be denoted by $F(X)$.

\begin{theorem}\label{thm:F_extension}
For every metric space $X$, $F(X)$ is an extension of $X$ such that
\begin{lista}{(\roman{lista})}
 \item if $|Y|<\kappa$, then every isometric embedding $f_0:Y\setminus\{y\}\to X$ has an extension to an isometric embedding $f:Y\to F(X)$,
 \item for every $y\in F(X)$ there is $Z\in[X]^{<\kappa}$ such that  $d(x,y)=d(x,Z,y)$ for each $x\in X$, where $d$ is the metric of $F(X)$.
\end{lista}
\end{theorem}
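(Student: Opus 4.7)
My plan is to extract both conclusions directly from the construction of $F(X)$ that is set up immediately before the statement; the substantial content (realising every one-point metric extension of a small subset of $X$, and controlling distances in the amalgam) was already done in Theorem~\ref{thm:amalgamacja} together with the bookkeeping of the metrics $d_{\alpha,\beta}$, so the theorem is essentially a convenient reformulation of those bulleted properties.

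For (i), given $f_0\colon Y\setminus\{y\}\to X$ with $|Y|<\kappa$, I would set $X_\ast:=f_0[Y\setminus\{y\}]$. Since $|X_\ast|<\kappa$, the enumeration $[X]^{<\kappa}=\{X_\alpha:\alpha<\lambda\}$ supplies some $\alpha<\lambda$ with $X_\alpha=X_\ast$, and $f_0$ may be viewed as an isometry onto $X_\alpha$. Property~(3) of the families $\{d_{\alpha,\beta}:\beta<\lambda\}$ then yields $\beta<\lambda$ and an isometry $g\colon Y\to X_\alpha\cup\{a_{\alpha,\beta}\}$ with $g\res(Y\setminus\{y\})=f_0$. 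The composition $f:=i_{\alpha,\beta}\circ g$ is an isometric embedding of $Y$ into $F(X)$, and since $i_{\alpha,\beta}\res X_\alpha=\id_{X_\alpha}$, it agrees with $f_0$ on $Y\setminus\{y\}$, so $f$ is the required extension.

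For (ii), I would simply invoke the last of the four bulleted properties of the amalgam $F(X)$: for every $y\in F(X)$ there exists $\alpha<\lambda$ such that $d(y,x)=d(y,X_\alpha,x)$ for every $x\in X$. Because $X_\alpha\in[X]^{<\kappa}$ by definition of the enumeration, the choice $Z:=X_\alpha$ yields (ii). The case $y\in X$ is subsumed by the same clause; alternatively $Z=\{y\}$ works by the triangle inequality.

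The main obstacle, such as it is, is not in these deductions but in the preparatory bookkeeping that has already been carried out: one must check that $f_0[Y\setminus\{y\}]$ really appears as a labelled $X_\alpha$ (it does, since the enumeration of $[X]^{<\kappa}$ was arranged with repetitions allowed), that the metric constructed on $X_\alpha\cup\{a_{\alpha,\beta}\}$ realises every possible one-point extension (which is exactly property~(3)), and that the embeddings $i_{\alpha,\beta}$ produced by the amalgamation lemma fix $X_\alpha$ pointwise. All of these were arranged in advance, so the proof reduces to assembling these facts and verifying that the composition $i_{\alpha,\beta}\circ g$ has the claimed domain, codomain and behaviour on $Y\setminus\{y\}$.
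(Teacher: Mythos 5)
Your proposal is correct and follows essentially the same route as the paper: part (i) is obtained by identifying $f_0[Y\setminus\{y\}]$ with some $X_\alpha$, applying property (3) to get the one-point extension $g$ onto $X_\alpha\cup\{a_{\alpha,\beta}\}$, and composing with the embedding $i_{\alpha,\beta}$ that fixes $X_\alpha$ pointwise, while part (ii) is read off directly from the last bulleted property of the amalgam. No substantive difference from the paper's argument.
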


\begin{proof}
Fix a metric space $X$ and assume that $\{X_\alpha:\alpha<\lambda\}=[X]^{<\kappa}\setminus\{\emptyset\}$ and $\{d_{\alpha,\beta}:\beta<\lambda\}$ satisfy conditions (1)--(3) from the beginning of this section.

(i) Fix a metric space $Y$, $|Y|<\kappa$, $y\in Y$, and an isometric embedding $f_0:Y\setminus \{y\}\to X$.
There is $\alpha<\lambda$ such that $f_0[Y\setminus\{y\}]=X_\alpha$.
Thus $f_0:Y\setminus\{y\}\to X_\alpha$ is an isometry.
By (3) there is $\beta<\lambda$ and an isometry $g:Y\to X_\alpha\cup\{a_{\alpha,\beta}\}$ such that $g\res (Y\setminus\{y\})=f_0$.
By (6) there is an isometric embedding $i_{\alpha,\beta}:X_\alpha\cup\{a_{\alpha,\beta}\}\to F(X)$ such that $i_{\alpha,\beta}\res X_\alpha=\id_{X_\alpha}$.
The function $f=i_{\alpha,\beta}\circ g:Y\to F(X)$ is an isometric embedding.
For all $y'\in Y\setminus\{y\}$ we have
\[
 f(y')=i_{\alpha,\beta}(g(y'))=i_{\alpha,\beta}(f_0(y))=f_0(y),
\]
since $f_0(y)\in X_\alpha$.
Thus $f\res (Y\setminus\{y\})=f_0$.

(ii) Fix $y\in F(X)$.
By (7) there is $\alpha<\lambda$ such that $d(y,x)=d(y,X_\alpha,x)$ for all $x\in X$.
\end{proof}

In~\cite{Katetov} Kat\v{e}tov construed an extension $E(X,\kappa)$ which satisfies conditions (i) and (ii) of the above theorem.
The metric $\sigma$ of the extension $E(X,\kappa)$ satisfies for $x,y\in E(X,\kappa)\setminus X$ the following equality
\[
\sigma(x,y)=\sup\{|\sigma(x,z)-\sigma(y,z)|:z\in X\}
\]
that is, the distance $\sigma(x,y)$ is as small as possible.
It can be shown that for $x,y\in F(X)\setminus X$ we have
\[
 \rho(x,y)=\inf\{\rho(x,z)+\rho(z,y):z\in X\}
\]
that is, the distance $\rho(x,y)$ is as large as possible.

\section{The discrete character of a point}

We say that a metric space $(Y,\sigma)$ is \emph{discrete} if $\sigma(x,y)\in\{0,1\}$ for all $x,y\in Y$.
If $Y$ is a discrete subspace of $(X,d)$, then by the \emph{middle point of} $Y$ \emph{in} $X$ we mean $x\in X$ such that $d(x,y)=1/2$ for every $y\in Y$.
If there is no middle point of a discrete subspace $Y\subseteq X$, then we say that $Y$ is \emph{without middle points in} $X$.
If cardinal $\kappa$ is fixed and every $Z\in [Y]^\kappa$ is without middle points, then we say that $Y$ is \emph{hereditarily without middle points in} $X$.
We call the cardinal
\begin{multline*}
\tau_\kappa(x,X)=\sup\{|Y|:Y\subseteq X\text{ is hereditarily without middle points in }X\\\
\text{and }x\in Y\} 
\end{multline*}
the \emph{discrete character of }$x$ \emph{in} $X$.

\begin{lemma}\label{lem:isometry_preserves_tau}
 If $f:X\to X$ is an isometry, then $x$ and $f(x)$ have the same discrete character.
\end{lemma}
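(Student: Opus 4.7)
The plan is to show that the operation $Y\mapsto f[Y]$ restricts to a cardinality-preserving bijection between the family of discrete subsets of $X$ that are hereditarily without middle points in $X$ and contain $x$, and the analogous family for $f(x)$; the equality of discrete characters then follows by taking suprema on both sides.

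I would proceed by three elementary preservation observations. \emph{First}, a set $Y\subseteq X$ is discrete iff $f[Y]$ is, simply because $d(f(y),f(y'))=d(y,y')$ takes the same value in $\{0,1\}$. \emph{Second}, for any discrete $Z\subseteq X$, a point $m\in X$ is a middle point of $Z$ in $X$ iff $f(m)$ is a middle point of $f[Z]$ in $X$: one direction is the isometry identity $d(f(m),f(z))=d(m,z)=1/2$ for all $z\in Z$; for the reverse, surjectivity of $f$ lets us write any middle point of $f[Z]$ in $X$ as $f(m)$ for some unique $m\in X$, and the identity $d(m,z)=d(f(m),f(z))=1/2$ then shows $m$ is a middle point of $Z$ in $X$. \emph{Third}, because $f$ is a bijection, $[f[Y]]^\kappa=\{f[Z]:Z\in [Y]^\kappa\}$; combined with the second observation, $Y$ is hereditarily without middle points in $X$ iff $f[Y]$ is.

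Given these, if $Y\ni x$ is discrete and hereditarily without middle points in $X$, then $f[Y]\ni f(x)$ has the same cardinality and the same property, whence $\tau_\kappa(f(x),X)\xgeq|Y|$; taking the supremum over such $Y$ gives $\tau_\kappa(f(x),X)\xgeq\tau_\kappa(x,X)$. Applying the identical argument to $f^{-1}$, which is again a (bijective) isometry of $X$, yields the reverse inequality. There is no serious obstacle here: the only step requiring any thought is the reverse direction of the second observation, where one uses that an isometry of $X$ onto itself is surjective, as is implicit throughout the paper (for instance in the definitions of $\kappa$-homogeneity and of a rigid space in the introduction).
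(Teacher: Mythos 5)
Your proof is correct and follows essentially the same route as the paper: both arguments show that an isometry (and its inverse) carries discrete, hereditarily-without-middle-points sets containing $x$ to sets of the same kind containing $f(x)$, by transporting any putative middle point back via $f^{-1}$, and then compare the suprema. The paper merely phrases the key step as a proof by contradiction and leaves the symmetric application to $f^{-1}$ implicit, whereas you spell these out; there is no substantive difference.
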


\begin{proof}
Fix $x\in X$, an isometry $f:X\to X$ and hereditarily without middle points subspace $Y\subseteq X$ such that $x\in Y$.
Let $d$ be the metric of $X$.
Suppose that $f[Y]$ is not hereditarily without middle points.
Thus there is $Z\in [f[Y]]^\kappa$ with a middle point $z\in X$.
Then $f^{-1}[Z]\in [Y]^{\kappa}$. 
Observe that $d(f^{-1}(z),f^{-1}(x))=d(z,x)$ for all $z\in Z$.
Then $f^{-1}(z)$ is a middle point of $f^{-1}[Z]$, a contradiction.
\end{proof}

By the \emph{weak middle point of} a discrete subspace $Y\subseteq X$ we mean $x\in X$ such that $d(x,y)=d(x,y')<1$ for all $y,y'\in Y$.
By $d(x,A)=\inf\{d(x,a):a\in A\}$ we denote the distance between $x$ and $A$ if $A\neq\emptyset$.

\begin{lemma}\label{lem:reduction_of_middle_point}
 Assume that $\kappa$ is a regular cardinal such that $\lambda^{\aleph_0}<\kappa$ for $\lambda<\kappa$.
 Assume that  $(X,d)$ is an extension of  $Y$ such that for every $x\in X\setminus Y$ there is $Z\in [Y]^{<\kappa}$ such that $d(x,y)=d(x,Z,y)$ for all $y\in Y$.
Then for every $\epsilon>0$ and a weak middle point $x\in X\setminus Y$ of a discrete subspace $D\in[Y]^{\kappa}$ there exist $D'\in [D]^\kappa$ and a weak middle point $y\in Y$ of $D'$ such that $d(x,y)+d(y,D')<d(x,D)+\epsilon$.
\end{lemma}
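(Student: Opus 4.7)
The plan is to push the point $x$ into $Y$ approximately, using the hypothesis that every point of $X \setminus Y$ has its distances to $Y$ witnessed through a fixed $Z \in [Y]^{<\kappa}$, and then to cut $D$ down twice by pigeonhole (using regularity of $\kappa$) so as to end up with one common witness $z$ and one common value of the distance from $z$ to the remaining elements of $D$.

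Write $r = d(x,D) < 1$ and pick $\epsilon' > 0$ with $\epsilon' < \epsilon$ and $r + \epsilon' < 1$. By the hypothesis applied to $x \in X \setminus Y$, there exists $Z \in [Y]^{<\kappa}$ with $d(x,y) = \inf\{d(x,z) + d(z,y) : z \in Z\}$ for every $y \in Y$. Since $d(x,d) = r$ for each $d \in D$, I would choose, for each such $d$, some $z_d \in Z$ approximating the infimum, i.e.\ satisfying
\[
d(x,z_d) + d(z_d,d) < r + \epsilon'.
\]

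Now comes the double pigeonhole. Since $|Z| < \kappa$ and $\kappa$ is regular, the map $d \mapsto z_d$ is constant with value some $z \in Z$ on a set $D_0 \in [D]^\kappa$. Next consider the map $D_0 \to \bbR$, $d \mapsto d(z,d)$; its image has cardinality at most $\continuum$, and the hypothesis $\lambda^{\aleph_0} < \kappa$ for $\lambda < \kappa$ (taken at $\lambda = \aleph_0$) gives $\continuum < \kappa$, so by regularity of $\kappa$ some fibre has cardinality $\kappa$, yielding $D' \in [D_0]^\kappa$ and $s \in \bbR$ with $d(z,d) = s$ for every $d \in D'$.

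Taking $y := z$ will finish the proof: the estimate $s = d(z,d) \xleq r + \epsilon' - d(x,z) < 1$ (for any $d \in D'$) shows that $z$ is a weak middle point of the still-discrete subspace $D'$, and
\[
d(x,z) + d(z,D') = d(x,z) + s < r + \epsilon' < d(x,D) + \epsilon
\]
is the required bound. The step I expect to be the main obstacle is the second pigeonhole, which demands \emph{exact} equality of distances on a $\kappa$-sized subset rather than approximate equality; this is precisely why the hypothesis $\continuum < \kappa$, and not merely $|Z| < \kappa$, is what one must invoke.
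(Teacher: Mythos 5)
Your proof is correct and follows essentially the same strategy as the paper's: use the hypothesis on $x$ to pass to witnesses in a set $Z\in[Y]^{<\kappa}$, then use the cardinal-arithmetic assumptions and regularity of $\kappa$ to stabilize, on a $\kappa$-sized subset of $D$, both the witness point and its distance to the points of $D$, which makes that witness the desired weak middle point. The only difference is in the bookkeeping: the paper attaches to each $y\in D$ an entire $\omega$-sequence of witnesses and pigeonholes once on $\zf{\omega}(Z\times\bbR)$ (invoking $|Z|^{\aleph_0}\cdot\continuum<\kappa$), whereas you take a single $\epsilon'$-approximate witness per point and pigeonhole twice, needing only $|Z|<\kappa$ and $\continuum<\kappa$ --- a marginally more economical use of the hypotheses, but the same argument.
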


\begin{proof}
We can assume that $\epsilon<1-d(x,D)$.
Let $Z\in [Y]^{<\kappa}$ be such that $d(x,y)=d(x,Z,y)$ for all $y\in Y$.
For each $y\in D$ there is $(t_{y,n})_{n<\omega}\subseteq Z$ such that $d(x,y)=\lim_{n\to\infty}(d(x,t_{y,n})+d(t_{y,n},y))$.
Thus we have function $\Phi:D\to \zf{\omega}(Z\times\bbR)$,
\[
\Phi(y)=(t_{y,n},d(t_{y,n},y))_{n<\omega}
\]
 for $y\in D$. 
Since $|\zf{\omega}(Z\times\bbR)|=|Z|^{\aleph_0}\cdot\continuum<\kappa$, there is $(z_n,r_n)_{n<\omega}\in\zf{\omega}(Z\times\bbR)$ such that $|\Phi^{-1}[\{(z_n,r_n)_{n<\omega}\}]|=|D|$.
Let $D'=\Phi^{-1}[\{(z_n,r_n)_{n<\omega}\}]$.
There is $n<\omega$ such that $d(x,z_n)+d(z_n,D')<d(x,D)+\epsilon<1$.
Fix $y,y'\in D'$.
Then $\Phi(y)=\Phi(y')$, hence $t_{y,n}=t_{y',n}=z_n$ and $d(z_n,y)=d(z_n,y')=r_n<1$.
Thus $z_n\in Y$ is a weak middle point of $D'$.
\end{proof}

\section{The operation of adding discrete subspaces}

Fix a metric space $X$ and $Y\subseteq X$.
Let $X\setminus Y=\{x_\alpha:\alpha<\lambda\}$ for some cardinal number $\lambda$.
For every $\alpha<\lambda$, let $D_\alpha$ be a set disjoint with $X$ such that $|D_\alpha|=\aleph_{\kappa+|X|+\alpha+2}$.
We can assume that $D_\alpha\cap D_\beta=\emptyset$ for $\alpha<\beta<\lambda$.
Let $d_\alpha$ be a metric on the set $D_\alpha\cup\{x_\alpha\}$ which makes it a discrete space.
Using Theorem \ref{thm:amalgamacja} for the family $\{X\}\cup\{D_\alpha\cup\{x_\alpha\}:\alpha<\lambda\}$ we obtain an amalgam  $A(X,Y)=X\cup\bigcup_{\alpha<\lambda}D_\alpha$ and a family $\calA(X,Y)=\{D_\alpha\cup\{x_\alpha\}:\alpha<\lambda\}$ such that
\begin{itemize}
 \item[(A1)]  for every $x\in X\setminus Y$ there exists a discrete subspace $D_x\in\calA(X,Y)$ such that $D_x\cap X=\{x\}$ and $|D_x|>|X|\cdot \kappa^+$,
\item[(A2)] for every $x,y\in X\setminus Y$, if $x\neq y$, then $|D_x|\neq |D_y|$,
\item[(A3)] $d(x',y)=d(x',x)+d(x,y)=1+d(x,y)$ for all $x\in X\setminus Y$, $x'\in D_x\setminus\{x\}$ and $y\in X$, where $d$ is the metric of $A(X,Y)$.
\item[(A4)] $X$ is a subspace of $A(X,Y)$.
\end{itemize}

We define
\[
\calD(X)=\{Y\in [X]^{\kappa}:Y\mbox{ is hereditarily without middle points}\}.
\]
Fix $\calG\subseteq \calD(X)$.
Let $\{Y_\alpha:\alpha<\lambda\}=\{Y\in\calD(X):\forall_{Z\in \calG}|Y\cap Z|<\kappa\}$
for some cardinal number $\lambda$.
Let $Z=\{z_\alpha:\alpha<\lambda\}$ be a set of cardinality $\lambda$, disjoint with $X$.
For every $\alpha<\lambda$ let
\[d_\alpha(x,y)=\left\{\begin{array}{ll}
                   d(x,y),&\mbox{if }x,y\in Y_\alpha,\\
\frac 12,&\mbox{if }y\in Y_\alpha, x=z_\alpha.
                  \end{array}\right.
                  \]
Then $d_\alpha$ is a metric on $Y_\alpha\cup\{z_\alpha\}$.
Using Theorem \ref{thm:amalgamacja} for $\{X\}\cup\{Y_\alpha\cup\{z_\alpha\}:\alpha<\lambda\}$ we obtain an amalgam $S(X,\calG)=X\cup Z$ with metric $\rho$ such that
\begin{itemize}
\item[(S1)] for each $Y\in\calD(X)$ such that $|Y\cap Z|<\kappa$ for all $Z\in\calG$, there is $Y'\in [Y]^\kappa$ and a middle point $x\in S(X,\calG)$ of $Y'$,
\item[(S2)] for every $y\in S(X,\calG)$ there is a discrete subspace $Y\in [X]^{\kappa}$ such that $\rho(y,x)=\rho(y,Y,x)$ for all $x\in S(X,\calG)\setminus\{y\}$.
\end{itemize}

\section{A construction of a rigid $\kappa$-superuniversal metric space}

We recall the following easy observation.

\begin{lemma}\label{lem:granica}
Let $\{Z_\beta:\beta<\alpha\}$ be a family of metric spaces such that $Z_\beta$ is a subspace of $Z_{\gamma}$, for all $\beta<\gamma$.
Then there exists the metric $\rho$ on the set $\bigcup_{\gamma<\alpha}Z_\gamma$ such that $Z_\beta$ is a subspace of $\bigcup_{\gamma<\alpha}Z_\gamma$, for all $\beta<\alpha$.
\end{lemma}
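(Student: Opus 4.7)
The plan is to define the metric $\rho$ pointwise by transferring the given metrics $d_\beta$ of the $Z_\beta$. For any two points $x,y\in\bigcup_{\gamma<\alpha}Z_\gamma$ there exist indices $\beta_1,\beta_2<\alpha$ with $x\in Z_{\beta_1}$ and $y\in Z_{\beta_2}$; setting $\beta=\max\{\beta_1,\beta_2\}$, both points lie in $Z_\beta$ by the chain property. I would then declare $\rho(x,y)=d_\beta(x,y)$.

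To show that this is well-defined, I would observe that if $x,y\in Z_\beta\cap Z_{\beta'}$ for some $\beta\xleq\beta'$, then $Z_\beta$ is a subspace of $Z_{\beta'}$, so $d_\beta=d_{\beta'}\res Z_\beta$ and in particular $d_\beta(x,y)=d_{\beta'}(x,y)$. Thus $\rho(x,y)$ does not depend on the choice of witnessing index. Verifying the metric axioms is then routine: symmetry, nonnegativity, and the identity of indiscernibles follow immediately from the corresponding properties of each $d_\beta$. For the triangle inequality, given $x,y,z$ in the union, I would pick $\beta_1,\beta_2,\beta_3<\alpha$ with $\{x,y\}\subseteq Z_{\beta_1}$, $\{y,z\}\subseteq Z_{\beta_2}$, $\{x,z\}\subseteq Z_{\beta_3}$, and set $\beta=\max\{\beta_1,\beta_2,\beta_3\}$; then all three pairs lie in $Z_\beta$ and the inequality $\rho(x,z)\xleq\rho(x,y)+\rho(y,z)$ reduces to the triangle inequality for $d_\beta$. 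The subspace assertion $\rho\res Z_\beta=d_\beta$ is immediate from the construction.

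There is no real obstacle here; this is a straightforward colimit-style argument that works uniformly in $\alpha$, regardless of whether $\alpha$ is a successor or limit ordinal. One could alternatively deduce the conclusion by applying the amalgamation lemma (Theorem~\ref{thm:amalgamacja}) to the family $\{Z_\beta:\beta<\alpha\}$, since condition (iii) of that theorem holds trivially for a chain and condition (ii) holds because every finite set of points already lies in a single $Z_\beta$ (so every induced cycle is contained in a member of the family). However, the direct verification above is shorter and avoids unpacking the graph-theoretic machinery.
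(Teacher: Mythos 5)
Your argument is correct and is exactly the standard direct-limit construction one would expect here; the paper itself states this lemma as an ``easy observation'' and omits the proof entirely, so there is nothing to compare against beyond noting that your well-definedness check via the chain compatibility and the reduction of the triangle inequality to a single $Z_\beta$ containing all three points is the intended argument.
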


Assume that $\kappa>\continuum$ is a regular cardinal such that $\lambda^{\aleph_0}<\kappa$ for $\lambda<\kappa$, for example $\kappa=\continuum^+$.
We define the empty metric space $X_0=\emptyset$ and a single-element metric space $X_1=\{0\}$.
Assume that we have constructed an increasing chain of metric spaces $\{(X_\beta,d_\beta):\beta<\alpha\}$.

If $\alpha$ is a limit ordinal, then we set
\[
 X_{\alpha+1}=X_\alpha=\bigcup_{\beta<\alpha}X_\beta\quad\quad\text{ and }\quad\quad\calA(X_{\alpha+1},X_\alpha)=\emptyset.
 \]
If $\alpha=\beta+2$ for some $\beta<\alpha$, then we set
\begin{equation}\label{eqn:x_alpha}
 X_\alpha=F(S(A(X_{\beta+1},X_\beta),\bigcup_{\gamma\xleq\beta}\calA(X_{\gamma+1},X_\gamma))).
\end{equation}
Thus we obtain a metric space $(X_{\kappa^+},d)$, $\{X_\alpha:\alpha<\kappa^+\}$ and $\{\calA(X_{\alpha+1},X_\alpha):\alpha<\kappa^+\}$ with the following properties:
\begin{lista}{(P\arabic{lista})}
 \item $X_{\kappa^+}=\bigcup\{X_\alpha:\alpha<\kappa^+\}$,
 \item $X_\beta$ is a subspace of $X_\alpha$ for all $\beta<\alpha<\kappa^+$,
 \item for all $\alpha<\kappa^+$, if $\alpha$ is limit, then $X_{\alpha+1}=X_\alpha=\bigcup\{X_\beta:\beta<\alpha\}$ and $\calA(X_{\alpha+1},X_\alpha)=\emptyset$,
 \item if $\alpha=\beta+2<\kappa^+$, then the formula (\ref{eqn:x_alpha}) holds,
 \item for all $x\in X_{\kappa^+}$ there exists $\alpha<\kappa$ and a discrete subspace $D_x\in\calA(X_{\alpha+1},X_\alpha)$ such that $x\in X_{\alpha+1}\setminus X_\alpha$ and $D_x\cap X_{\alpha+1}=\{x\}$,
 \item every point of $X_{\kappa^+}\setminus X_1$ is added by $F$, $S$ or $A$.
\end{lista}

We define \emph{rank} $r(x)=\min\{\alpha<\kappa^+:x\in X_\alpha\}$ \emph{of} $x\in X_{\kappa^+}$.

\begin{theorem}\label{thm:X_kappa_is_kappa-superuniversal}
 $X_{\kappa^+}$ is $\kappa$-superuniversal.
\end{theorem}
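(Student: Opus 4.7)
The plan is to extend $f_0$ one new point at a time by transfinite recursion of length at most $\kappa$, invoking Theorem~\ref{thm:F_extension}(i) at every successor step. More precisely, fix a metric space $Y$ with $|Y|\xleq\kappa$, a subset $Y_0\subseteq Y$ with $|Y_0|<\kappa$, and an isometric embedding $f_0\colon Y_0\to X_{\kappa^+}$. I would enumerate $Y\setminus Y_0=\{y_\xi:\xi<\mu\}$ for some $\mu\xleq\kappa$, set $Y_\xi=Y_0\cup\{y_\eta:\eta<\xi\}$, and build an increasing chain of isometric embeddings $f_\xi\colon Y_\xi\to X_{\kappa^+}$ extending $f_0$, ultimately taking $f=f_\mu$.

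At a limit stage $\xi\xleq\mu$ I set $f_\xi=\bigcup_{\eta<\xi}f_\eta$, which is automatically an isometric embedding. The real content sits at the successor step. Given $f_\xi$, the regularity of $\kappa$ yields $|Y_\xi|\xleq|Y_0|+|\xi|<\kappa$, so by regularity of $\kappa^+$ combined with (P1)--(P2) the image $f_\xi[Y_\xi]$ lies in $X_\alpha$ for some $\alpha<\kappa^+$; I may assume $\alpha=\beta+1$ with $\beta+2<\kappa^+$. By (P4),
\[
X_{\beta+2}=F\bigl(S(A(X_{\beta+1},X_\beta),\textstyle\bigcup_{\gamma\xleq\beta}\calA(X_{\gamma+1},X_\gamma))\bigr),
\]
and the inner space $Z_\beta:=S(A(X_{\beta+1},X_\beta),\bigcup_{\gamma\xleq\beta}\calA(X_{\gamma+1},X_\gamma))$ contains $X_{\beta+1}$ as a subspace. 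Viewing $f_\xi$ as an isometric embedding into $Z_\beta$ and noting that $|Y_\xi\cup\{y_\xi\}|<\kappa$, Theorem~\ref{thm:F_extension}(i) applied with $X:=Z_\beta$ and $Y:=Y_\xi\cup\{y_\xi\}$ produces an extension $f_{\xi+1}\colon Y_\xi\cup\{y_\xi\}\to F(Z_\beta)=X_{\beta+2}\subseteq X_{\kappa^+}$, as required.

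The main obstacle is essentially bookkeeping: verifying that at every successor stage the current image fits inside some $X_{\beta+1}$ that is followed by a further $F$-step. This reduces to the regularity of $\kappa^+$ (which makes any subset of $X_{\kappa^+}$ of size $<\kappa$ bounded in the tower) and to $|Y_\xi|<\kappa$ (which uses the regularity of $\kappa$), both of which are immediate. I therefore expect that no genuinely new combinatorial ingredient is needed here beyond applying Theorem~\ref{thm:F_extension}(i) cofinally many times along the tower $(X_\alpha)_{\alpha<\kappa^+}$; the whole point of the construction is to make this one-point extension available at every stage, and the $\kappa$-superuniversality then falls out by iteration.
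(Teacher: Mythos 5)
Your proof is correct and takes essentially the same route as the paper: place the image of the current partial embedding inside some $X_{\beta+1}\subseteq S(A(X_{\beta+1},X_\beta),\bigcup_{\gamma\xleq\beta}\calA(X_{\gamma+1},X_\gamma))$ and apply Theorem~\ref{thm:F_extension}(i) to realize one further point inside $X_{\beta+2}=F(\cdots)$. The only difference is one of detail: the paper carries out a single one-point extension and dismisses the rest as ``an easy induction,'' whereas you spell out the transfinite recursion explicitly, including the limit stages and the boundedness of the image in the tower via the regularity of $\kappa^+$.
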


\begin{proof}
Fix a metric space $Y$ of cardinality less than $\kappa$, $y\in Y$ and an isometric embedding $f_0:Y\setminus\{y\}\to X_{\kappa^+}$.
 There exists $\alpha<\kappa^+$ such that $f_0[Y\setminus\{y\}]\subseteq X_{\alpha}$.
 Moreover we have
 \[
X_\alpha\subseteq X_{\alpha+2}\subseteq S(A(X_{\alpha+2},X_{\alpha+1},\bigcup_{\gamma\xleq\alpha}\calA(X_{\gamma+1},X_\gamma))).  
 \]
 From the condition (i) of Theorem \ref{thm:F_extension} there exists an isometric embedding $f:Y\to X_{\alpha+3}$ such that $f\res (Y\setminus \{y\})=f_0$.
 The rest of the proof is an easy induction on the number of points we have to add to the domain of an isometric embedding.
\end{proof}

The proof of the main theorem will be preceded by a series of lemmas.

\begin{lemma}\label{lem:operations_A_and_S_do_not_add_middle_points}
Assume that $x,y\in X_{\kappa^+}$ and $y$ is a weak middle point of $D\in[D_x]^\kappa$.
Then $r(x)<r(y)$ and $y$ is added by $F$ or $S$.
Moreover, if $y$ is a middle point of $D$, then $y$ is added by $F$.
\end{lemma}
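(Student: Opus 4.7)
The plan is to establish the three conclusions in sequence. The rank inequality $r(x)<r(y)$ and the exclusion of the $A$-operation both rest on property (A3) and the amalgam distance formula of Theorem \ref{thm:amalgamacja}. The final clause, excluding $S$ in the middle-point case, rests on property (S2) combined with the defining constraint $|Y_\delta\cap Z|<\kappa$ imposed on the discrete set $Y_\delta$ used to produce $y$.

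For Step 1, I write $r(x)=\alpha+1$, so that $D_x\in\calA(X_{\alpha+1},X_\alpha)$. Assuming $r(y)\xleq\alpha+1$, one has $y\in X_{\alpha+1}$, and (A3) gives $d(y,a)=1+d(y,x)\xgeq 1$ for every $a\in D_x\setminus\{x\}$. Since $|D|=\kappa>1$, at least one $a\in D$ lies in $D_x\setminus\{x\}$, and for that $a$ the weak middle-point inequality $d(y,a)<1$ fails. Hence $r(y)>r(x)$.

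For Step 2, suppose $y\in D_z\setminus\{z\}$ is produced by $A$ at stage $\sigma+2$ with $z\in X_{\sigma+1}\setminus X_\sigma$. By Step 1, $\sigma\xgeq\alpha$. If $\sigma=\alpha$ and $z=x$, then $y\in D_x\setminus\{x\}$ and $d(y,a)=1$ on $D_x\setminus\{y\}$. If $\sigma=\alpha$ and $z\ne x$, the amalgam formula of Theorem \ref{thm:amalgamacja} evaluates $d(y,a)$ along the forced path through $\{z\}$ and $\{x\}$, giving $d(y,a)=2+d(z,x)\xgeq 2$ for $a\in D_x\setminus\{x\}$. If $\sigma>\alpha$, then $D_x\subseteq X_{\alpha+2}\subseteq X_{\sigma+1}$ and (A3) yields $d(y,a)=1+d(z,a)\xgeq 1$ for every $a\in D_x$. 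In each case the weak middle-point hypothesis is violated, so $y$ must be added by $F$ or $S$.

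For the final clause, I assume $y$ is a genuine middle point of $D$ and, for contradiction, that $y$ was added by $S$ at stage $\gamma+2$ as the middle point of a set $Y_\delta\in\calD(A(X_{\gamma+1},X_\gamma))$ chosen with $|Y_\delta\cap Z|<\kappa$ for every $Z\in\calG_\gamma=\bigcup_{\epsilon\xleq\gamma}\calA(X_{\epsilon+1},X_\epsilon)$. From $r(x)\xleq\gamma+1$ one verifies that $D_x$ is contained in $A(X_{\gamma+1},X_\gamma)$ and belongs to $\calG_\gamma$. Property (S2) then gives $d(y,a)=1/2+d(Y_\delta,a)$ for $a\in D$, so $d(y,a)=1/2$ forces $d(Y_\delta,a)=0$. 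Because $Y_\delta$ is a discrete $\kappa$-set with mutual distances $1$, any sequence in $Y_\delta$ that converges in the ambient metric is eventually constant, so $a\in Y_\delta$. Hence $D\subseteq Y_\delta\cap D_x$ has cardinality $\kappa$, contradicting $|Y_\delta\cap D_x|<\kappa$. The main obstacle is the bookkeeping in Step 2: one must match each possible stage at which $y$ could be produced by $A$ with the correct invocation of (A3) or the amalgam formula so that the decisive ``$+1$'' in the distance is picked up.
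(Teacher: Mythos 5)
Your proof is correct and follows essentially the same route as the paper's: (A3) together with the amalgam distance formula yields $r(x)<r(y)$ and excludes $A$, while (S2), the constraint $|Y_\delta\cap D_x|<\kappa$, and the discreteness of $Y_\delta$ exclude $S$ in the middle-point case. The only differences are cosmetic — your more explicit case split in Step~2, and concluding $D\subseteq Y_\delta$ before invoking the cardinality bound rather than first picking a single $v\in D\setminus Z$ as the paper does.
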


\begin{proof}
There are $\beta,\delta<\kappa^+$ such that $x\in X_{\beta+2}\setminus X_{\beta+1}$ and $y\in X_{\delta+2}\setminus X_{\delta+1}$.
Thus $r(y)=\delta+2$ and $r(x)=\beta+2$.

Suppose that $r(x)\xgeq r(y)$.
Since $r(y)\xleq\beta+2$ we have $y\in X_{\beta+2}$, hence by the property (A3):
\[
d(v,y)=d(v,x)+d(x,y)=1+d(x,y)\xgeq 1 
\]
for all $v\in D_x\setminus\{x\}$, a contradiction, since $y$ is a weak middle point of $D\subseteq D_x$.
Thus $r(x)<r(y)$, hence $r(x)\xleq \delta+1$.

Suppose that $y$ is added by $A$.
Then there is $w\in X_{\delta+1}$ such that $y\in D_w\setminus\{w\}$.
Thus $D_x\cup D_w\subseteq A(X_{\delta+1},X_\delta)$,
hence by (A3) we have
\[
 d(y,v)=d(y,w)+d(w,v)\xgeq 1
\]
for all $v\in D_x$, contrary to the fact that $y$ is a weak middle point of $D$.

Suppose that $y$ is a middle point of $D$ added by $S$.
Then there exists a discrete subspace $Z\in [A(X_{\delta+1},X_\delta)]^{\kappa}$ such that
\[
d(y,w)=d(y,Z,w)\quad\text{ for all }w\in A(X_{\delta+1},X_\delta)\setminus\{y\},
\]
and $|Z\cap D'|<\kappa$ for all $D'\in \bigcup_{\zeta\xleq \delta}\calA(X_{\zeta+1},X_{\zeta})$.
Since $r(x)\xleq \delta+1$ we have $x\in X_{\delta+1}$, hence  $D_x\in\bigcup_{\zeta\xleq\delta}\calA(X_{\zeta+1},X_\zeta)$.
Thus $|Z\cap D_x|<\kappa$ and $|Z\cap D|<\kappa$, hence there is $v\in D\setminus Z$.
Then
\[
 \frac 12=d(y,v)=d(y,Z,v)=\frac 12+\inf\{d(z,v):z\in Z\},
\]
 hence $\inf\{d(z,v):z\in Z\}=0$.
 There exists $z_0\in Z$ such that $d(v,z_0)<\frac12$.
 Since $v\notin Z$ and $z_0\in Z$, we have $v\neq z_0$,  $d(v,z_0)>0$.
 Using the formula $\inf\{d(z,v):z\in Z\}=0$ again we obtain $z_1\in Z$ such that $d(z_1,v)<d(z_0,v)$.
 Thus $z_0$ and $z_1$ are distinct elements of $Z$.
Then $1=d(z,z')\xleq d(z,v)+d(z',v)<1$, a contradiction. 
\end{proof}

\begin{lemma}\label{lem:approximation_of_weak_middle_point}
 Assume that $x,y\in X_{\kappa^+}$ and $y$ is a weak middle point of some $D\in[D_x]^{\kappa}$, and $y$ is added by $F$.
 Then for every $\epsilon>0$ there exist $D'\in[D]^\kappa$ and a weak middle point $y'$ of $D'$ such that
 \[
d(y,y')+d(y',D')<d(y,D)+\epsilon                                                                                                           
\]
and
 \begin{lista}{(\roman{lista})}
  \item  $r(y')\xleq r(y)$ and $y'$ is added by $S$, or
  \item $r(y')<r(y)$ and $y'$ is added by $F$.
 \end{lista}
\end{lemma}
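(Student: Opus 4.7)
The plan is to apply Lemma~\ref{lem:reduction_of_middle_point} to $y$ viewed as a weak middle point inside the $F$-extension in which it was created, and then to classify the resulting intermediate weak middle point $y'$ according to its location in the nested construction of that $F$-extension.

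First, unpack the stage at which $y$ appears. Since $y$ is added by $F$, there is $\delta<\kappa^+$ with $r(y)=\delta+2$; writing $W=S(A(X_{\delta+1},X_\delta),\bigcup_{\gamma\xleq\delta}\calA(X_{\gamma+1},X_\gamma))$ we have $X_{\delta+2}=F(W)$ and $y\in F(W)\setminus W$. By Lemma~\ref{lem:operations_A_and_S_do_not_add_middle_points}, $r(x)<r(y)$, so $x\in X_{\delta+1}$, which yields $D_x\subseteq X_{\delta+1}\subseteq A(X_{\delta+1},X_\delta)\subseteq W$. In particular $D\in[W]^\kappa$, and $D$ is discrete as a subset of the discrete set $D_x$.

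Second, verify the hypotheses of Lemma~\ref{lem:reduction_of_middle_point}. The cardinal $\kappa$ fixed in Section~6 is regular and satisfies $\lambda^{\aleph_0}<\kappa$ for $\lambda<\kappa$, and Theorem~\ref{thm:F_extension}(ii) guarantees that for every $v\in F(W)\setminus W$ there exists $Z\in[W]^{<\kappa}$ with $d(v,u)=d(v,Z,u)$ for all $u\in W$. Applying Lemma~\ref{lem:reduction_of_middle_point} to the pair $(F(W),W)$ and to $y\in F(W)\setminus W$ produces $D'\in[D]^\kappa$ and a weak middle point $y'\in W$ of $D'$ with
\[
  d(y,y')+d(y',D')<d(y,D)+\epsilon.
\]

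Third, perform a case analysis on where $y'$ sits inside $W$. Exactly one of the following holds: (a) $y'\in W\setminus A(X_{\delta+1},X_\delta)$, so $y'$ is added by $S$ at stage $\delta+2$; (b) $y'\in A(X_{\delta+1},X_\delta)\setminus X_{\delta+1}$, so $y'$ is added by $A$; (c) $y'\in X_{\delta+1}$, so $r(y')\xleq\delta+1<r(y)$. Case (b) is ruled out immediately: since $y'$ is a weak middle point of $D'\subseteq D_x$, Lemma~\ref{lem:operations_A_and_S_do_not_add_middle_points} forbids $y'$ from being added by $A$. Case (a) yields $r(y')=\delta+2=r(y)$ with $y'$ added by $S$, giving conclusion (i). In case (c), Lemma~\ref{lem:operations_A_and_S_do_not_add_middle_points} again excludes the $A$-operation, so $y'$ is added by $S$ (conclusion (i), with strict inequality of ranks) or by $F$ (conclusion (ii)).

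The main obstacle is purely bookkeeping: correctly identifying $W$ as the base space of the $F$-operation responsible for $y$, and checking that $D_x$—hence $D$—sits inside $W$, so that Lemma~\ref{lem:reduction_of_middle_point} applies without modification. Once this identification is in place, the conclusion follows from Lemma~\ref{lem:reduction_of_middle_point} combined with the structural restriction on weak middle points supplied by Lemma~\ref{lem:operations_A_and_S_do_not_add_middle_points}.
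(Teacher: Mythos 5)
Your proof is correct and follows the paper's argument essentially step for step: Lemma~\ref{lem:operations_A_and_S_do_not_add_middle_points} gives $r(x)<r(y)$ and hence $D\subseteq W$, Lemma~\ref{lem:reduction_of_middle_point} applied to the pair $(F(W),W)$ produces $y'\in W$, and classifying $y'$ by its location in $W$ yields alternative (i) or (ii). The one blemish is the intermediate inclusion $D_x\subseteq X_{\delta+1}$, which can fail when $r(x)=\delta+1$ (then $D_x\in\calA(X_{\delta+1},X_\delta)$ and $D_x\setminus\{x\}$ is disjoint from $X_{\delta+1}$); the inclusion you actually use, $D_x\subseteq A(X_{\delta+1},X_\delta)\subseteq W$, still holds, so the argument is unaffected.
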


\begin{proof}
Since $y$ is added by $F$, there exist $\beta<\kappa^+$ and $Z\in [T]^{<\kappa}$ such that
\[
y\in X_{\beta+2}\setminus T,
\]
where $T=S(A(X_{\beta+1},X_\beta),\bigcup_{\delta\xleq\beta}\calA(X_{\delta+1},X_\delta))$ and $d(y,t)=d(y,Z,t)$ for all $t\in T$.

By Lemma \ref{lem:operations_A_and_S_do_not_add_middle_points} we have $r(x)<r(y)$, hence $D_x\subseteq T$.
By Lemma \ref{lem:reduction_of_middle_point} we obtain a weak middle point $y'\in T$ of some $D'\in[D]^\kappa$ such that $d(y,y')+d(y',D')<d(y,D)+\epsilon$.
Observe that $r(y')\xleq r(y)$, hence if $y'$ is added by $S$, then the proof is complete.

Assume that $y'$ is not added by $S$.
 By Lemma \ref{lem:operations_A_and_S_do_not_add_middle_points} the point  $y'$ is added by $F$, hence it suffices to show that $r(y')<r(y)$.
Each point of $T$ is added by $S$, $A$ or it belongs to $X_{\beta+1}$.
Since $y'\in T$ and $y'$ is added neither by $S$, nor by $A$, we have $y\in X_{\beta+1}$.
Thus $r(y')\xleq \beta+1<r(y)$.
\end{proof}

\begin{lemma}\label{lem:points_added_by_S_have_distance_1_over_2}
 If $x,y\in X_{\kappa^+}$ are distinct points added by $S$, then $d(x,y)\xgeq 1/2$.
\end{lemma}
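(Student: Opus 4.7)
Since $S$ appears only in the successor-of-successor clause of the construction, every point added by $S$ has rank of the form $\delta+2$ and is one of the fresh middle points $z_\alpha$ introduced in the amalgam $S(A(X_{\delta+1},X_\delta),\cdot)$, declared to satisfy $d_\alpha(z_\alpha,u)=1/2$ for every $u$ in its associated set $Y_\alpha\subseteq A(X_{\delta+1},X_\delta)$. Write $r(x)=\beta+2$ and $r(y)=\gamma+2$ and, without loss of generality, assume $\beta\xleq\gamma$. My approach is to compute $d(x,y)$ by locating both points inside the $S$-amalgamation at stage $\gamma+2$ and applying the ``moreover'' clause of Theorem~\ref{thm:amalgamacja}. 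Its hypothesis is easy to verify: the fresh points $z_\alpha$ are pairwise distinct and disjoint from the base $A(X_{\gamma+1},X_\gamma)$, so distinct $Y_\alpha\cup\{z_\alpha\}$ and $Y_{\alpha'}\cup\{z_{\alpha'}\}$ meet only inside the base. Distances among points already present are then preserved when one applies $F$ (which is an extension by Theorem~\ref{thm:F_extension}) and passes to later stages (by (P2) and Lemma~\ref{lem:granica}).

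In the case $\beta=\gamma$ we have $x=z_\alpha$ and $y=z_{\alpha'}$ with $\alpha\neq\alpha'$ introduced at the same stage. The distance formula provided by Theorem~\ref{thm:amalgamacja} reduces $d(x,y)$ to an infimum over $u\in Y_\alpha$, $v\in Y_{\alpha'}$ of $d_\alpha(z_\alpha,u)+d(u,v)+d_{\alpha'}(v,z_{\alpha'})=1+d(u,v)$, which is at least $1$. In the case $\beta<\gamma$, property (P2) and property (A4) of $A$ give $x\in X_{\beta+2}\subseteq X_{\gamma+1}\subseteq A(X_{\gamma+1},X_\gamma)$, so that $x$ lies in the base of the $S$-amalgamation at stage $\gamma+2$; writing $y=z_\alpha$, the one-sided form of the same distance formula reduces $d(x,y)$ to an infimum over $u\in Y_\alpha$ of $d_\alpha(z_\alpha,u)+d(u,x)=\tfrac12+d(u,x)$, hence $d(x,y)\xgeq\tfrac12$.

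The whole argument is short because the amalgamation machinery of Section~2 already does the work. The main thing to get right is more organizational than technical: one must recognize that at the later of the two stages, $\gamma+2$, the point $x$ is already part of the base $A(X_{\gamma+1},X_\gamma)$, so that the appropriate (one-sided) form of the ``moreover'' clause of Theorem~\ref{thm:amalgamacja} is applicable. Once this is set up, the lower bound $\tfrac12$ drops out of the explicit path formula.
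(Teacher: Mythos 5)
Your proof is correct and follows essentially the same route as the paper: the paper's proof simply invokes property (S2) (which packages the one-sided distance formula from the ``moreover'' clause of Theorem~\ref{thm:amalgamacja}) at the later of the two stages, observing that the other point already lies in the space to which that formula applies, so the $\tfrac12$ term forces $d(x,y)\xgeq\tfrac12$. Your explicit case split (same stage versus different stages) is a minor organizational variant of the same argument.
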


\begin{proof}
Assume that $x\in T_\alpha$ and $y\in T_\beta$, where $\alpha\xleq\beta$ and
\[
T_\xi=S(A(X_{\xi+1},X_{\xi}),\bigcup_{\delta\xleq\xi}\calA(X_{\delta+1},X_{\delta}))\setminus A(X_{\xi+1},X_{\xi}) 
\]
for $\xi\in\{\alpha,\beta\}$.
The point $y$ is added by $S$, hence by (S2) there exists $Z\in [A(X_{\beta+1},X_{\beta})]^\kappa$ such that
\[
d(y,w)=\frac12+\inf\{d(z,w):z\in Z\} 
\]
for all $w\in T_\beta\setminus \{y\}$.
Since $T_\alpha\subseteq T_\beta$, we have $d(x,y)\xgeq \frac12$.
\end{proof}

\begin{lemma}\label{lem:specjalna_redukcja_dla_operacji_S}
Assume that $x,y\in X_{\kappa^+}$ and $y$ is a weak middle point of $D\in [D_x]^\kappa$.
 Then for all $\epsilon>0$ there exists $D'\in [D]^\kappa$ and its weak middle point $y'$ added by $S$ such that $d(y,y')+d(y',D')<d(y,D)+\epsilon$.
\end{lemma}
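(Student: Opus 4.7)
The plan is first to dispose of the trivial case and then iterate Lemma \ref{lem:approximation_of_weak_middle_point} until the construction descends to a point added by $S$, using well-foundedness of the ordinals to force termination and a geometric allocation of errors to keep the total tolerance under $\epsilon$. If $y$ is itself added by $S$, I simply set $y' := y$ and $D' := D$, which gives $d(y,y') + d(y',D') = d(y,D) < d(y,D) + \epsilon$; by Lemma \ref{lem:operations_A_and_S_do_not_add_middle_points} the only remaining possibility is that $y$ is added by $F$, which I now assume.

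In that case, I would build sequences $y_0, y_1, \ldots$ and $D_0 \supseteq D_1 \supseteq \cdots$ starting with $y_0 := y$, $D_0 := D$, using preallocated tolerances $\epsilon_n := \epsilon/2^{n+1}$. As long as $y_n$ is added by $F$, applying Lemma \ref{lem:approximation_of_weak_middle_point} with $D_n$ in the role of $D$ and error $\epsilon_n$ produces a weak middle point $y_{n+1}$ of some $D_{n+1} \in [D_n]^\kappa$ with
\[
d(y_n, y_{n+1}) + d(y_{n+1}, D_{n+1}) < d(y_n, D_n) + \epsilon_n,
\]
and with either (i) $y_{n+1}$ added by $S$, in which case the construction halts, or (ii) $y_{n+1}$ added by $F$ with $r(y_{n+1}) < r(y_n)$. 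The hypotheses of Lemma \ref{lem:approximation_of_weak_middle_point} are preserved inductively: $D_n \in [D_x]^\kappa$ by nested $\kappa$-sized inclusion, and $y_n$ is a weak middle point of $D_n$ by the previous step's conclusion.

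The main obstacle, though a mild one, is ensuring that this process terminates. In case (ii), however, the ranks $r(y_n)$ form a strictly descending sequence of ordinals, so well-foundedness forces case (i) at some finite stage $N$. Setting $y' := y_N$ and $D' := D_N$, I would then obtain the required distance estimate by summing the $N$ recursive inequalities: the terms $d(y_n, D_n)$ for $1 \xleq n \xleq N-1$ telescope, giving
\[
\sum_{n=0}^{N-1} d(y_n, y_{n+1}) + d(y_N, D_N) < d(y, D) + \sum_{n=0}^{N-1}\epsilon_n < d(y,D) + \epsilon,
\]
after which the triangle inequality $d(y, y') \xleq \sum_{n=0}^{N-1} d(y_n, y_{n+1})$ yields $d(y,y') + d(y',D') < d(y, D) + \epsilon$, as desired.
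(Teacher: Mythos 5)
Your proposal is correct and follows essentially the same route as the paper: iterate Lemma \ref{lem:approximation_of_weak_middle_point} with a geometrically decreasing error budget, use the strict descent of ranks in the $F$-case to force termination at a point added by $S$, and telescope the resulting inequalities (the paper uses tolerances $\epsilon/2^{i+3}$ rather than $\epsilon/2^{n+1}$, an immaterial difference). The only cosmetic divergence is that you treat the case where $y$ is itself added by $S$ separately, whereas the paper absorbs it into the stopping condition at stage $n=0$.
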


\begin{proof}
Fix $\epsilon>0$.
Let $\alpha,\delta_0<\kappa^+$ be such that $r(x)=\alpha+2$ and $r(y)=\delta_0+2$.
From Lemma \ref{lem:operations_A_and_S_do_not_add_middle_points} it follows that $\alpha<\delta_0$.
Define $z_0=y$, $D_0=D$, $r(y)=\delta_0+2$ and assume that there exist $z_0,\ldots,z_n$, $D_n\subseteq \ldots\subseteq D_0$, $\delta_0>\ldots>\delta_n>\alpha$ such that for all $i\xleq n$:
\begin{lista}{(\roman{lista})}
 \item $z_i$ is a weak middle point of $D_i$,
 \item $d(z_{i-1},z_{i})+d(z_{i},D_{i})<d(z_{i-1},D_{i-1})+\epsilon/2^{(i+3)}$ for all $i>0$,
 \item $D_i\in[D_x]^\kappa$,
 \item $z_i\in X_{\delta_i+2}\setminus A(X_{\delta_{i}+1},X_{\delta_{i}})$.
 \end{lista}
 If $z_n\in S(A(X_{\delta_n+1},X_{\delta_n}),\bigcup_{\delta\xleq\delta_n}\calA(X_{\delta+1},X_\delta))$, then we define $y'=z_n$ and $D'=D_n$.
Otherwise, by Lemma \ref{lem:approximation_of_weak_middle_point} we obtain $\alpha<\delta_{n+1}<\delta_n$, $D_{n+1}\in [D_n]^\kappa$ and a weak middle point
\[
z_{n+1}\in X_{\delta_{n+1}+2}\setminus A(X_{\delta_{n+1}+1},X_{\delta_{n+1}})
\]
of $D_{n+1}$ such that
\[
d(z_n,z_{n+1})+d(z_{n+1},D_{n+1})<d(z_n,D_n)+\frac1{2^{n+4}}. 
\]
Observe that for some $n<\omega$ the point $z_n$ is added by $S$, since otherwise we would obtain an infinite decreasing sequence of ordinal numbers.
The following estimate completes the proof:
\begin{multline*}
 d(z_0,z_n)+d(z_n,D_n)\xleq \sum_{i=1}^nd(z_{i-1},z_i)+d(z_n,D_n)\xleq\\
 \sum_{i=1}^{n-1}d(z_{i-1},z_i)+d(z_{n-1},z_n)+d(z_n,D_n)<\\
 \sum_{i=1}^{n-1}d(z_{i-1},z_i)+d(z_{n-1},D_{n-1})+\frac{\epsilon}{2^{n+3}}<\ldots\\
 \ldots<d(z_0,D_0)+\sum_{i=1}^n\frac \epsilon{2^{i+3}}<d(z_0,D_0)+\frac \epsilon4.
\end{multline*}
\end{proof}

\begin{lemma}\label{lem:D_x_is_hereditarily_without_middle_points}
For every $x\in X_{\kappa^+}$ subspace $D_x$ is hereditarily without middle points in $X_{\kappa^+}$.
\end{lemma}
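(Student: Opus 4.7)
The plan is a proof by contradiction, trapping any putative middle point between the $S$-approximation of Lemma~\ref{lem:specjalna_redukcja_dla_operacji_S} and the $1/2$-separation of Lemma~\ref{lem:points_added_by_S_have_distance_1_over_2}. Suppose some $D\in[D_x]^\kappa$ has a middle point $y\in X_{\kappa^+}$. Then $d(y,D)=\frac{1}{2}$, and Lemma~\ref{lem:operations_A_and_S_do_not_add_middle_points} forces $y$ to be added by $F$.

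For each $\epsilon>0$ I would apply Lemma~\ref{lem:specjalna_redukcja_dla_operacji_S} to produce $D'_\epsilon\in[D]^\kappa$ and a weak middle point $y'_\epsilon$ of $D'_\epsilon$ added by $S$ such that
\[
 d(y,y'_\epsilon)+d(y'_\epsilon,D'_\epsilon)<\tfrac{1}{2}+\epsilon.
\]
The core step is to establish $d(y'_\epsilon,D'_\epsilon)\xgeq\tfrac{1}{2}$, whereupon $d(y,y'_\epsilon)<\epsilon$. To see this, write $r(x)=\beta+2$ and $r(y'_\epsilon)=\delta+2$; Lemma~\ref{lem:operations_A_and_S_do_not_add_middle_points} applied to $y'_\epsilon$ as a weak middle point of $D'_\epsilon\in[D_x]^\kappa$ gives $\beta<\delta$, so
\[
 D_x\subseteq A(X_{\delta+1},X_\delta)\subseteq S\bigl(A(X_{\delta+1},X_\delta),\bigcup_{\gamma\xleq\delta}\calA(X_{\gamma+1},X_\gamma)\bigr).
\]
Since $y'_\epsilon$ is $S$-added (hence not in $D_x$), property (S2) yields $d(y'_\epsilon,v)=\tfrac{1}{2}+d(Y_{y'_\epsilon},v)\xgeq\tfrac{1}{2}$ for every $v\in D'_\epsilon$, as needed.

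To finish, set $y^\ast:=y'_{1/4}$. For any $\epsilon<1/4$, both $y^\ast$ and $y'_\epsilon$ are added by $S$, and the triangle inequality gives
\[
 d(y^\ast,y'_\epsilon)\xleq d(y,y^\ast)+d(y,y'_\epsilon)<\tfrac{1}{4}+\epsilon<\tfrac{1}{2},
\]
so Lemma~\ref{lem:points_added_by_S_have_distance_1_over_2} forces $y'_\epsilon=y^\ast$. Hence $d(y,y^\ast)<\epsilon$ for every $\epsilon<1/4$, giving $y=y^\ast$, which contradicts the fact that $y$ is $F$-added while $y^\ast$ is $S$-added. The main obstacle is the middle paragraph---verifying that $D_x$ lies in the base of the $S$-operation that creates $y'_\epsilon$, so that (S2) can be applied---which pivots on the rank comparison supplied by Lemma~\ref{lem:operations_A_and_S_do_not_add_middle_points}; the remaining steps are routine triangle-inequality bookkeeping.
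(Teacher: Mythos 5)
Your proof is correct, and it rests on the same two pillars as the paper's: Lemma~\ref{lem:specjalna_redukcja_dla_operacji_S} to approximate the putative middle point by $S$-added weak middle points, and Lemma~\ref{lem:points_added_by_S_have_distance_1_over_2} to keep distinct $S$-added points $1/2$ apart. The endgame, however, is genuinely different. The paper applies Lemma~\ref{lem:specjalna_redukcja_dla_operacji_S} exactly twice --- first with $\epsilon=1/4$ to get $z'$, then with the tailored value $\epsilon=d(z',D')-1/2$ to get a second point $z''$ --- and must verify both $d(z',z'')<1/2$ and $z'\neq z''$ before invoking the separation lemma. You instead run the approximation for every small $\epsilon$, extract from (S2) the explicit lower bound $d(y'_\epsilon,D'_\epsilon)\xgeq 1/2$ (which the paper uses only implicitly when it passes from ``$z'$ is not a middle point'' to $d(z',D')>1/2$), deduce $d(y,y'_\epsilon)<\epsilon$, use the $1/2$-separation to conclude that all $y'_\epsilon$ with $\epsilon<1/4$ coincide, and let $\epsilon\to 0$ to force $y$ itself to equal this common $S$-added point, contradicting the fact from Lemma~\ref{lem:operations_A_and_S_do_not_add_middle_points} that $y$ is added by $F$. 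Your version buys a cleaner contradiction (no need to choose a second $\epsilon$ or to check distinctness of the two approximants) at the cost of invoking the mutual exclusivity of ``added by $F$'' and ``added by $S$,'' which the construction does guarantee since each stage adjoins pairwise disjoint sets of new points. The rank computation in your middle paragraph, showing $D_x\subseteq A(X_{\delta+1},X_\delta)$ so that (S2) applies to $D'_\epsilon$, does require distinguishing $\delta=\beta+1$ from $\delta>\beta+1$, but both cases are immediate and this is exactly the containment the paper itself establishes inside the proofs of Lemmas~\ref{lem:operations_A_and_S_do_not_add_middle_points} and~\ref{lem:approximation_of_weak_middle_point}.
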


\begin{proof}
Suppose that there exists $x\in X_{\kappa^+}$ such that $D_x$ is not hereditarily without middle points in $X_{\kappa^+}$.
Thus there exist $\alpha<\kappa^+$ and $D\in[D_x]^\kappa$ with a middle point $z\in X_{\kappa^+}$ such that $x\in X_{\alpha+2}\setminus X_{\alpha+1}$.
It follows from Lemma \ref{lem:specjalna_redukcja_dla_operacji_S} that  there exists $D'\in [D]^\kappa$ and its weak middle point $z'$ added by $S$ such that
\begin{equation}\label{eq:epsilon}
d(z,z')+d(z',D')<\frac34. 
\end{equation}
By Lemma \ref{lem:operations_A_and_S_do_not_add_middle_points}, $z'$ is not a middle point of $D'$, hence $d(z',D')>1/2$.
Define $\epsilon=d(z',D')-\frac12$.
Since $z$ is a middle point of $D$, it is also a middle point of  $D'\subseteq D$.
By Lemma \ref{lem:specjalna_redukcja_dla_operacji_S} for $D'$ and its middle point $z$, we obtain a weak middle point  $z''$ of some $D''\in[D']^\kappa$ such that $z''$ is added by $S$ and
\begin{equation}\label{eqn:D''}
d(z,z'')+d(z'',D'')<\frac12+\epsilon. 
\end{equation}
Thus
\[
 d(z,z'')<\frac12+\epsilon-d(z'',D'')=d(z',D')-d(z'',D'')<\frac34-\frac12=\frac14.
\]
By (\ref{eq:epsilon}),
\[
 d(z,z')<\frac34-d(z',D')< \frac34-\frac12=\frac14.
\]
Thus $d(z',z'')\xleq d(z',z)+d(z,z'')<\frac12$.
By (\ref{eqn:D''}), we have
\[
d(z'',D'')<d(z',D')=d(z',D''), 
\]
 hence $z'\neq z''$.
Points $z'$ and $z''$ are added by $S$, hence it follows from Lemma \ref{lem:points_added_by_S_have_distance_1_over_2} that $d(z',z'')\xgeq 1/2$, contrary to $d(z',z'')<1/2$.
\end{proof}

\begin{lemma}\label{lem:computing_distance_with_subset_xleq_kappa}
For every $x,t\in X_{\kappa^+}$, there is $Z\in [D_x]^{\xleq\kappa}$ such that $d(t,y)=d(t,Z,y)$ for all $y\in D_x$.
\end{lemma}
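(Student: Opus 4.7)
I plan to prove the lemma by transfinite induction on $r(t)$, with a nested induction at each rank treating points added by $A$ first, by $S$ second, and by $F$ third; the order $A<S<F$ inside one rank is forced because in Cases $S$ and $F$ the auxiliary points that will arise may have the same rank as $t$ but be added by a simpler operation.

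If $r(t)\xleq r(x)$, write $r(x)=\alpha+1$, so $D_x\subseteq A(X_{\alpha+1},X_\alpha)$ and $t\in X_{\alpha+1}$; property (A3) gives $d(t,y)=d(t,x)+d(x,y)$ for every $y\in D_x$, and $Z:=\{x\}$ works. For $r(t)>r(x)$, write $r(t)=\delta+2$, so $r(x)\xleq\delta+1$ and $D_x\subseteq A(X_{\delta+1},X_\delta)\subseteq T_\delta$, where $T_\delta:=S(A(X_{\delta+1},X_\delta),\bigcup_{\zeta\xleq\delta}\calA(X_{\zeta+1},X_\zeta))$. In Case $A$, with $t\in D_w\setminus\{w\}$: if $w=x$ take $Z:=\{t\}$; if $w\neq x$ and $r(x)\xleq\delta$ then $D_x\subseteq X_{\delta+1}$, (A3) gives $d(t,y)=1+d(w,y)$, and the outer IH applied to $w$ (of rank $<r(t)$) yields $Z_w\in [D_x]^{\xleq\kappa}$ with $d(t,y)=d(t,Z_w,y)$; if $w\neq x$ and $r(x)=\delta+1$, then both $D_w$ and $D_x$ sit in the amalgam $A(X_{\delta+1},X_\delta)$ with pivot intersections $\{w\}$ and $\{x\}$, so the ``moreover'' clause of Theorem \ref{thm:amalgamacja} gives $d(t,y)=d(t,x)+d(x,y)$ and $Z:=\{x\}$ works.

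In Case $S$, the point $t$ is a weak middle point of some $Y_\beta\in [A(X_{\delta+1},X_\delta)]^\kappa$, and (S2) gives $d(t,y)=\tfrac12+\inf_{y'\in Y_\beta}d(y',y)$. For each $y'\in Y_\beta$ I would produce $Z_{y'}\in [D_x]^{\xleq\kappa}$ with $d(y',y)=d(y',Z_{y'},y)$ on $D_x$: take $Z_{y'}:=\{y'\}$ if $y'\in D_x$; otherwise $r(y')\xleq r(t)$, with equality only when $y'\in A(X_{\delta+1},X_\delta)\setminus X_{\delta+1}$ is added by $A$, so the outer IH (if $r(y')<r(t)$) or the inner IH (Case $A$ at the same rank) supplies $Z_{y'}$. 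Setting $Z:=\bigcup_{y'\in Y_\beta}Z_{y'}$ gives $|Z|\xleq\kappa$ by regularity of $\kappa$. Given $\epsilon>0$ and $y\in D_x$, picking $y'\in Y_\beta$ with $d(y',y)<d(t,y)-\tfrac12+\tfrac\epsilon2$ and then $z\in Z_{y'}$ with $d(y',z)+d(z,y)<d(y',y)+\tfrac\epsilon2$, and noting $d(t,z)\xleq\tfrac12+d(y',z)$ from (S2), yields $d(t,z)+d(z,y)<d(t,y)+\epsilon$, and hence $d(t,Z,y)=d(t,y)$. Case $F$ is analogous: Theorem \ref{thm:F_extension}(ii) supplies $Z'\in [T_\delta]^{<\kappa}$ with $d(t,y)=d(t,Z',y)$ on $T_\delta\supseteq D_x$, and for each $z'\in Z'$ the outer or inner IH (Cases $A$ and $S$ at the same rank having already been handled) produces $Z_{z'}$; $Z:=\bigcup_{z'\in Z'}Z_{z'}$ again has size $\xleq\kappa$, and the verification is the same infimum-chase.

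The main obstacle is organizing the nested induction correctly: the auxiliary points $y'\in Y_\beta$ and $z'\in Z'$ may share the rank of $t$, and I must ensure that the well-founded order on pairs (rank, operation type) with $A<S<F$ within each rank resolves these dependencies; once that bookkeeping is in place, each verification is a routine chase of infima using (A3), (S2) and Theorem \ref{thm:F_extension}(ii).
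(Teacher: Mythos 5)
Your proof is correct and follows essentially the same route as the paper's: an induction along the construction (rank of $t$, refined by the order in which $A$, $S$ and $F$ act within a single stage), factoring $d(t,\cdot)$ on $D_x$ through an intermediate set of size at most $\kappa$ supplied by (A3), (S2) or Theorem \ref{thm:F_extension}(ii), applying the induction hypothesis pointwise, and taking the union followed by the same infimum chase. The paper only writes out the $S$-case and declares $A$ and $F$ analogous, whereas you make the well-founded bookkeeping explicit; the substance is identical.
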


\begin{proof}
Fix $x\in X_{\kappa^+}$ and let $\alpha<\kappa^+$ be such that $x\in X_{\alpha+2}\setminus X_{\alpha+1}$.
Then $d(t,y)=d(t,x)+d(x,y)=d(t,\{x\},y)$ for all $y\in D_x$ and $t\in A(X_{\alpha+2},X_{\alpha+1})\setminus D_x$.
If $t\in D_x$, then $d(t,y)=d(t,\{t\},y)$ for all $y\in D_x$.

Fix $\beta\xgeq\alpha$ and assume that for all $t\in A(X_{\beta+2},X_{\beta+1})$ there exists $Z_t\in [D_x]^{\xleq\kappa}$ such that $d(t,y)=d(t,Z_t,y)$ for all $y\in D_x$.
Fix $t\in X_{\beta+3}\setminus X_{\beta+2}$ added by $S$.
There exists a discrete subspace $Z\in [A(X_{\beta+2},X_{\beta+1})]^{\kappa}$ such that $d(t,y)=d(t,Z,y)$ for all $y\in A(X_{\beta+2},X_{\beta+1})$.
From induction hypothesis, for all $z\in Z$ there exists $T_z\in [D_x]^{\xleq\kappa}$ such that $d(z,y)=d(z,T_z,y)$ for all $y\in D_x$.
Then $\bigcup_{z\in Z}T_z\in [D_x]^{\xleq\kappa}$.
Suppose that there exists $y\in D_x$ such that
\[\textstyle
\epsilon=d(t,\bigcup_{z\in Z}T_z,y)-d(t,y)>0.
\]
There exists $z\in Z$ such that $d(t,z)+d(z,y)<d(t,y)+\epsilon/2$ and
 there exists $w\in T_z$ such that $d(z,w)+d(w,y)<d(z,y)+\epsilon/2$.
Then
\begin{multline*}
d(t,w)+d(w,y)\xleq d(t,z)+d(z,w)+d(w,y)<d(x,z)+d(z,y)+\epsilon/2<\\
\textstyle d(t,y)+\epsilon=d(t,\bigcup_{z\in Z}T_z,y)\xleq d(t,w)+d(w,y),
\end{multline*}
a contradiction.
The case of $t$ being added by $F$ or $A$ is analogous.
\end{proof}

\begin{lemma}
 If $x,y\in X_{\kappa^+}$ are distinct, then $|D_x\cap D_y|\xleq 1$.
\end{lemma}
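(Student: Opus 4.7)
The plan is to locate $D_x$ and $D_y$ in the stratified construction of $X_{\kappa^+}$ and split into two cases according to whether the two discrete subspaces were introduced at the same stage of the recursion or not. By property (P5) I can pick ordinals $\alpha,\beta<\kappa^+$ with $x\in X_{\alpha+1}\setminus X_\alpha$, $D_x\in\calA(X_{\alpha+1},X_\alpha)$ and $y\in X_{\beta+1}\setminus X_\beta$, $D_y\in\calA(X_{\beta+1},X_\beta)$, and assume without loss of generality that $\alpha\xleq\beta$.

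In the case $\alpha=\beta$, both $D_x$ and $D_y$ lie in the same family $\calA(X_{\alpha+1},X_\alpha)$. By (A1), $D_x\cap X_{\alpha+1}=\{x\}$ and $D_y\cap X_{\alpha+1}=\{y\}$, so since $x\neq y$ we have $D_x\neq D_y$. The construction of the amalgam $A(X_{\alpha+1},X_\alpha)$ in Section~5 explicitly chooses the underlying sets $D_\gamma$ to be pairwise disjoint, so distinct members of $\calA(X_{\alpha+1},X_\alpha)$ are disjoint, and hence $D_x\cap D_y=\emptyset$.

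In the case $\alpha<\beta$, the point of the proof is that $D_x$ was already built in by the time stage $\beta+1$ was reached. Indeed, from (P4) we have $A(X_{\alpha+1},X_\alpha)\subseteq X_{\alpha+2}$, so $D_x\subseteq X_{\alpha+2}$. Since $\alpha<\beta$ gives $\alpha+2\xleq\beta+1$, property (P2) yields $X_{\alpha+2}\subseteq X_{\beta+1}$, so $D_x\subseteq X_{\beta+1}$. Applying (A1) to $D_y\in\calA(X_{\beta+1},X_\beta)$ gives $D_x\cap D_y\subseteq D_y\cap X_{\beta+1}=\{y\}$, so $|D_x\cap D_y|\xleq 1$.

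There is no real mathematical obstacle here: the argument is pure bookkeeping on the recursion, relying only on the disjointness of the sets chosen in the definition of the $A$ operation together with the nesting (P2) and the stratification (P4)--(P5). The only thing to be careful about is making sure that the inequality $\alpha+2\xleq\beta+1$ is read correctly for ordinals, and that the pairwise disjointness of the family $\calA(X_{\alpha+1},X_\alpha)$ is invoked from its construction rather than from a stated property (since (A1) records only the intersection with $X_{\alpha+1}$, not mutual disjointness of distinct $D_\gamma$'s).
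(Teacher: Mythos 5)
Your proof is correct and follows essentially the same case split as the paper's (equal rank versus $r(x)<r(y)$, with (A1) and the nesting $D_x\subseteq A(X_{\alpha+1},X_\alpha)\subseteq X_{\alpha+2}\subseteq X_{\beta+1}$ handling the latter case). You are in fact slightly more careful in the equal-rank case: the paper cites (A2), which only yields $|D_x|\neq|D_y|$, whereas you correctly trace the disjointness back to the explicit choice of pairwise disjoint sets $D_\gamma$ in the construction of $A(X,Y)$.
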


\begin{proof}
 Fix $x,y\in X_{\kappa^+}$ such that $x\neq y$.
 By (P5), there exist $\alpha,\beta<\kappa^+$ such that  $x\in X_{\alpha+1}\setminus X_{\alpha}$, $y\in X_{\beta+1}\setminus X_\beta$, $D_x\in\calA(X_{\alpha+1},X_\alpha)$ and $D_y\in\calA(X_{\beta+1},X_\beta)$.
 
 Consider the case $r(x)=r(y)$.
 By (A2), we have $D_x\cap D_y=\emptyset$.
 
 Consider the case $r(x)<r(y)$.
 Then $\alpha+2=r(x)+1\xleq r(y)=\beta+1$, hence $X_{\alpha+1}\subseteq A(X_{\alpha+1},X_\alpha)\subseteq X_{\beta+1}$. Therefore $D_x\cap D_y\subseteq X_{\beta+1}\cap D_y$.
By (A1), we have $D_y\cap X_{\beta+1}=\{y\}$.
\end{proof}

\begin{lemma}\label{lem:every_hwmp_space_is_a_subset_of_some_D_x}
Assume that $Y\subseteq X_{\kappa^+}$ is hereditarily without middle points in $X_{\kappa^+}$ and $|Y|>\kappa^+$.
Then there exists $x\in X_{\kappa^+}$ such that $Y\subseteq D_x$.
\end{lemma}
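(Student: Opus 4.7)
The plan is to split the argument into two steps: (A) find some $x^* \in X_{\kappa^+}$ with $|Y \cap D_{x^*}| > \kappa^+$; (B) upgrade this to $Y \subseteq D_{x^*}$ using Lemma~\ref{lem:computing_distance_with_subset_xleq_kappa}.

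For step (A), assume toward contradiction that $|Y \cap D_x| \xleq \kappa^+$ for every $x$, and set $\mathcal{B} = \{x : |Y \cap D_x| \xgeq \kappa\}$. I aim to produce $Z \in [Y]^\kappa$ with $|Z \cap D_x| < \kappa$ for every $x$. If $|\mathcal{B}| \xleq \kappa$, then $|\bigcup_{x \in \mathcal{B}}(Y \cap D_x)| \xleq \kappa \cdot \kappa^+ = \kappa^+ < |Y|$, so any $\kappa$-subset $Z$ of $Y \setminus \bigcup_{x \in \mathcal{B}}(Y \cap D_x)$ works. Otherwise $|\mathcal{B}| \xgeq \kappa^+$; pick $K \in [\mathcal{B}]^\kappa$ and, for each $x \in K$, choose $y_x \in (Y \cap D_x) \setminus \{x\}$---possible because $|Y \cap D_x| \xgeq \kappa$ and $D_x \cap X_{r(x)} = \{x\}$ by (A1). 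Set $Z = \{y_x : x \in K\}$. Each $y_x$ has rank exactly $r(x) + 1$ and lies in the newly added discrete piece $D_x \setminus \{x\}$; at a common stage these pieces are mutually disjoint, so the $y_x$ are pairwise distinct, and for any $x'$ one has $y_x \in D_{x'}$ only when either $x = x'$ or $x$ is the (unique) parent of $x'$ in the tree of $A$-extensions. Hence $|Z \cap D_{x'}| \xleq 2 < \kappa$. In either sub-case $Z \subseteq Y$ is hereditarily without middle points in $X_{\kappa^+}$; picking $\alpha < \kappa^+$ with $Z \subseteq X_{\alpha+1}$ yields $Z \in \calD(A(X_{\alpha+1}, X_\alpha))$ satisfying the hypothesis of (S1) for $\calG = \bigcup_{\gamma \xleq \alpha}\calA(X_{\gamma+1}, X_\gamma)$. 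Thus some $Z' \in [Z]^\kappa \subseteq [Y]^\kappa$ has a middle point in $S(A(X_{\alpha+1}, X_\alpha), \calG) \subseteq X_{\kappa^+}$, contradicting the hypothesis on $Y$.

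For step (B), fix $y \in Y$ and apply Lemma~\ref{lem:computing_distance_with_subset_xleq_kappa} with $x = x^*$ and $t = y$: there is $W \in [D_{x^*}]^{\xleq \kappa}$ with $d(y, z) = \inf_{v \in W}(d(y, v) + d(v, z))$ for every $z \in D_{x^*}$. Since $|Y \cap D_{x^*}| > \kappa^+ \xgeq |W|$, pick $z \in (Y \cap D_{x^*}) \setminus W$. Supposing $y \notin D_{x^*}$, we have $y \neq z$ and $d(y, z) = 1$ by discreteness of $Y$. For every $v \in W$ one has $v \neq z$, so $d(v, z) = 1$ by discreteness of $D_{x^*}$, giving $1 = d(y, z) = 1 + d(y, W)$ and hence $d(y, W) = 0$. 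But $W \subseteq D_{x^*}$ inherits the discrete metric: any sequence $(v_n)$ in $W$ with $d(y, v_n) \to 0$ would give $d(v_n, v_m) \to 0$ by the triangle inequality, contradicting $d(v_n, v_m) = 1$ for distinct elements. Hence $d(y, W) = 0$ is attained at some $v \in W$, i.e.\ $y = v \in W \subseteq D_{x^*}$---contradicting $y \notin D_{x^*}$. Therefore $Y \subseteq D_{x^*}$.

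The main obstacle lies in step (A), and specifically in the sub-case $|\mathcal{B}| \xgeq \kappa^+$: this requires the refined (rank-sensitive) form of almost-disjointness of the $D_x$'s---that the unique common element of $D_x$ and $D_{x'}$ with $r(x) < r(x')$ is necessarily $x'$ itself---rather than merely the bound $|D_x \cap D_{x'}| \xleq 1$. Once step (A) supplies an intersection exceeding $\kappa^+$, step (B) reduces to a short application of Lemma~\ref{lem:computing_distance_with_subset_xleq_kappa}.
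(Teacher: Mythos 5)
Your proof is correct, and its overall skeleton matches the paper's: a dichotomy on whether some $D_x$ meets $Y$ in a large set, with the ``yes'' branch (your step (B)) handled exactly as in the paper via Lemma~\ref{lem:computing_distance_with_subset_xleq_kappa} and the discreteness of $Y$ and $D_{x^*}$, and the ``no'' branch refuted by manufacturing a $\kappa$-sized subset of $Y$ to which (S1) applies. Where you genuinely diverge is in the ``no'' branch when many $D_x$'s have large trace on $Y$: the paper sets $\calK=\{D_x:|D_x\cap Y|=\kappa\}$, shows $|Y\setminus\bigcup\calK|<\kappa$ by an (S1) argument, deduces $|\calK|>\kappa^+$ by counting, and then kills this by showing via the metric identity (A3) (which forces $d(x',y')=2+d(x,y)$) that $\calK$ contains at most one set per rank, so $|\calK|\xleq\kappa^+$. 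You instead build an explicit almost-transversal $Z=\{y_x:x\in K\}$ meeting every $D_{x'}$ in at most two points and feed it directly to (S1); this avoids (A3) entirely but requires the rank-refined almost-disjointness fact that $D_x\cap D_{x'}\subseteq\{x'\}$ when $r(x)<r(x')$, which you correctly flag --- it is not the literal statement of the lemma bounding $|D_x\cap D_y|$ by $1$, but it is exactly what that lemma's proof (via (P5) and (A1)) establishes, so the step is legitimate. The paper's route buys a shorter argument resting on the already-recorded property (A3); yours buys a more uniform treatment (both sub-cases end in the same (S1) contradiction) and works under the weaker hypothesis $|Y\cap D_x|\xleq\kappa^+$ rather than $\xleq\kappa$.
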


\begin{proof}
Consider the case that there exists $x\in X_{\kappa^+}$ such that $|D_x\cap Y|>\kappa$.
Suppose that there exists $y\in Y\setminus D_x$.
By Lemma \ref{lem:computing_distance_with_subset_xleq_kappa} there exists $Z\in[D_x]^{\xleq\kappa}$ such that $d(y,t)=d(y,Z,t)$ for all $t\in D_x$.
Since $|Z|<|D_x\cap Y|$, there exists $t\in D_x\cap Y\setminus Z$.
Then $y\neq t$, hence
\[
 1=d(y,t)=d(y,Z,t)=\inf\{d(y,z)+1:z\in Z\}.
\]
Since $y\notin Z$, there exist $z,z'\in Z$ such that $z\neq z'$ and $d(x,z),d(x,z')<1/2$.
Then $1=d(z,z')\xleq d(z,x)+d(x,z')<1$, a contradiction.

Consider the case $|D_x\cap Y|\xleq\kappa$ for all $x\in X_{\kappa^+}$.
Define
\[\textstyle
\calK=\{D_x:|D_x\cap Y|=\kappa\}. 
\]
Suppose that $|Y\setminus \bigcup\calK|\xgeq \kappa$.
There exists $T\in [Y\setminus \bigcup\calK]^{\kappa}$.
Thus $|D_x\cap T|<\kappa$ for all $x\in X_{\kappa^+}$.
Since $|T|=\kappa$, there exists $\beta<\kappa^+$ such that $T\subseteq X_\beta$.
From (S1) it follows that there exists a middle point  $t\in X_{\beta+2}$ of $T$, contrary to the fact that $T\in [Y]^\kappa$ and $Y$ is hereditarily without middle points in $X_{\kappa^+}$.
Thus $|Y\setminus\bigcup\calK|<\kappa$.
Then $|Y\cap\bigcup\calK|=|Y|$.
If $|\calK|\xleq\kappa^+$, then
\[\textstyle
\kappa^+<|Y\cap\bigcup\calK|=|\bigcup_{D_x\in\calK}Y\cap D_x|\xleq |\calK|\cdot\sup\{|Y\cap D_x|:D_x\in\calK\}\xleq\kappa^+,
\]
a contradiction.
Thus $|\calK|>\kappa^+$.

Suppose that there exist $x,y\in X_{\kappa^+}$ such that $r(x)=r(y)$ and  $D_x,D_y\in\calK$.
Since $|D_x\cap Y|=|D_y\cap Y|=\kappa$, there exist $x'\in D_x\cap Y\setminus \{x\}$ and $y'\in D_y\cap Y\setminus\{y\}$.
It follows from (A3) that
\[
d(x',y')=d(x',x)+d(x,y')=1+d(x,y)+d(y,y')=2+d(x,y),
\]
contrary to $d(x',y')=1$.

Thus $|\calK\cap \{D_x:r(x)=\alpha\}|\xleq 1$ for all $\alpha<\kappa^+$.
Then
\[\textstyle
  |\calK|=|\bigcup_{\alpha<\kappa^+}\{D_x\in\calK:r(x)=\alpha\}|\xleq\kappa^+,
\]
contrary to $|\calK|>\kappa^+$.
\end{proof}

\begin{lemma}\label{lem:D_x_is_maximal}
 For all $x\in X_{\kappa^+}$, if $D$ is hereditarily without middle points in $X_{\kappa^+}$ and $x\in D$, then $|D|\xleq |D_x|$.
\end{lemma}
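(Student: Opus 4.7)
My plan is to split on the size of $D$ and appeal to Lemma~\ref{lem:every_hwmp_space_is_a_subset_of_some_D_x} in the large case. In the easy case $|D|\xleq\kappa^+$, property (A1) of the amalgam $A$ already forces $|D_x|>\kappa^+$, so $|D|\xleq\kappa^+<|D_x|$ at once.

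For the substantive case $|D|>\kappa^+$, Lemma~\ref{lem:every_hwmp_space_is_a_subset_of_some_D_x} yields some $y\in X_{\kappa^+}$ with $D\subseteq D_y$. If $x=y$, then $D_x=D_y\supseteq D$ and the bound is immediate. Otherwise $x\in D_y\setminus\{y\}$, and I identify the rank of $x$ as follows: using (P5), fix $\alpha<\kappa^+$ with $y\in X_{\alpha+1}\setminus X_\alpha$ and $D_y\in\calA(X_{\alpha+1},X_\alpha)$. By the disjointness clause of (A1) we have $D_y\cap X_{\alpha+1}=\{y\}$, so $x\notin X_{\alpha+1}$; but $D_y$ lies in the amalgam $A(X_{\alpha+1},X_\alpha)$, which is embedded into $X_{\alpha+2}$ through the subsequent $S$ and $F$ operations, so $x\in X_{\alpha+2}\setminus X_{\alpha+1}$. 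Consequently $D_x\in\calA(X_{\alpha+2},X_{\alpha+1})$, and a second application of (A1) at this later stage delivers
\[
|D_x|>|X_{\alpha+2}|\cdot\kappa^+\xgeq|D_y|\xgeq|D|,
\]
since $D_y\subseteq X_{\alpha+2}$.

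The step I expect to require most care is the rank identification in the second case: I must argue precisely that the ``new'' discrete point $x\in D_y\setminus\{y\}$ really has $x\in X_{\alpha+2}\setminus X_{\alpha+1}$, so that $D_x$ is constructed at the strictly later stage where the input space already contains $D_y$. This is exactly what the disjointness clause in (A1) together with the chain of embeddings running through $A$, $S$, and $F$ up to $X_{\alpha+2}$ are designed to give. Once this is in place, the cardinality explosion built into (A1) makes the final inequality $|D_x|>|D_y|$ automatic, and the proof closes.
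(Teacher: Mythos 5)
Your proof is correct and follows essentially the same route as the paper: the same split into $|D|\xleq\kappa^+$ versus $|D|>\kappa^+$, the same appeal to Lemma~\ref{lem:every_hwmp_space_is_a_subset_of_some_D_x} to get $D\subseteq D_y$, and the same use of the cardinality clause of (A1) to conclude. The only (harmless) difference is that you pin down the rank of $x$ directly from $x\in D_y\setminus\{y\}$, concluding $x\in X_{\alpha+2}\setminus X_{\alpha+1}$ and hence $|D_x|>|X_{\alpha+2}|\xgeq|D_y|$, whereas the paper runs a three-way comparison of $r(x)$ and $r(y)$ and rules out the case $r(y)>r(x)$ by the same disjointness clause of (A1).
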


\begin{proof}
Fix $x\in X_{\kappa^+}$.
There exists $\alpha<\kappa^+$ such that $x\in X_{\alpha+2}\setminus X_{\alpha+1}$.
Fix $D$ hereditarily without middle points in $X_{\kappa^+}$ such that $x\in D$.
If $|D|\xleq\kappa^+$, then $|D|\xleq |D_x|$, since, by (A1), $|D_x|>\kappa^+$.
Assume that $|D_x|>\kappa^+$.
It follows from Lemma \ref{lem:every_hwmp_space_is_a_subset_of_some_D_x} that there exists $y\in X_{\kappa^+}$ such that $D\subseteq D_y$.
There exists $\beta<\kappa^+$ such that $y\in X_{\beta+2}\setminus X_{\beta+1}$.
If $\beta<\alpha$, then $\beta+3\xleq\alpha+2$ and $D_y\subseteq X_{\beta+3}\subseteq X_{\alpha+2}$, hence $|D_y|\xleq |X_{\beta+3}|\xleq|X_{\alpha+2}|<|D_x|$.

If $\beta=\alpha$, then $x,y\in D_y\cap X_{\beta+2}$ and, by (A1), $D_y\cap X_{\beta+2}=\{y\}$, hence  $x=y$.

If $\beta>\alpha$, then $X_{\alpha+2}\subseteq X_{\beta+1}$.
Thus $x\in D_y\cap X_{\beta+1}=\emptyset$, a contradiction.
\end{proof}

\begin{lemma}\label{lem:different_tau}
 If $x$ and $y$ are distinct points of $X_{\kappa^+}$, then $\tau_\kappa(x,X_{\kappa^+})\neq \tau_\kappa(y,X_{\kappa^+})$.
\end{lemma}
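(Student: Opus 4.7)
The plan is to prove the identity $\tau_\kappa(x,X_{\kappa^+})=|D_x|$ for every $x\in X_{\kappa^+}$, and then observe that the assignment $x\mapsto|D_x|$ is injective; together these yield the lemma immediately. The identity splits into two inequalities: the lower bound $\tau_\kappa(x,X_{\kappa^+})\xgeq|D_x|$ is immediate from Lemma~\ref{lem:D_x_is_hereditarily_without_middle_points}, because $D_x$ is itself a hereditarily-without-middle-points subspace of $X_{\kappa^+}$ containing $x$, while the upper bound $\tau_\kappa(x,X_{\kappa^+})\xleq|D_x|$ is exactly the content of Lemma~\ref{lem:D_x_is_maximal}.

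To prove the injectivity, fix distinct $x,y\in X_{\kappa^+}$. By (P5) there are ordinals $\alpha_x,\alpha_y$ with $x\in X_{\alpha_x+1}\setminus X_{\alpha_x}$ and $D_x\in\calA(X_{\alpha_x+1},X_{\alpha_x})$, and analogously for $y$. Unwinding the construction of $A(X_{\alpha+1},X_\alpha)$ from Section~5, we have $|D_x|=\aleph_{\kappa+|X_{\alpha_x+1}|+\beta_x+2}$ for some indexing ordinal $\beta_x<|X_{\alpha_x+1}\setminus X_{\alpha_x}|$, and similarly for $|D_y|$. In the case $\alpha_x=\alpha_y$, property (A2) forces $\beta_x\neq\beta_y$, so by monotonicity of ordinal addition the two subscripts differ. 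In the case $\alpha_x<\alpha_y$, the space $X_{\alpha_y+1}$ contains $A(X_{\alpha_x+1},X_{\alpha_x})$, which in turn contains some $D_z$ with $|D_z|\xgeq\aleph_{\kappa+|X_{\alpha_x+1}|+2}$; hence $|X_{\alpha_y+1}|$, as an initial ordinal, is strictly larger than $\kappa+|X_{\alpha_x+1}|+\beta_x+2$, and the subscripts differ in this case as well. Since $\gamma\mapsto\aleph_\gamma$ is injective on ordinals, $|D_x|\neq|D_y|$, and the proof concludes.

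The delicate step is the cross-rank case: one must keep the bookkeeping in honest ordinal arithmetic rather than collapse it to cardinal arithmetic, in order to conclude that two ordinal subscripts of potentially the same cardinality are nonetheless distinct. Concretely, $\kappa+|X_{\alpha_x+1}|+\beta_x+2$ is bounded above by the ordinal $\kappa+|X_{\alpha_x+1}|\cdot 2$, whereas $\kappa+|X_{\alpha_y+1}|$ is already at least $\aleph_{\kappa+|X_{\alpha_x+1}|+2}$; this gap is precisely what the dispersive sizing $|D_\alpha|=\aleph_{\kappa+|X|+\alpha+2}$ prescribed in Section~5 was designed to guarantee, so no subtler argument is needed.
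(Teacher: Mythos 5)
Your proof is correct and follows essentially the same route as the paper: both establish $\tau_\kappa(x,X_{\kappa^+})=|D_x|$ from Lemmas \ref{lem:D_x_is_hereditarily_without_middle_points} and \ref{lem:D_x_is_maximal}, and then split the injectivity of $x\mapsto|D_x|$ into the equal-rank and cross-rank cases. The only difference is cosmetic: the paper simply cites (A2) and (A1) for the two cases, whereas you re-derive them from the explicit subscripts $\aleph_{\kappa+|X|+\alpha+2}$; your ordinal bookkeeping checks out but is not needed beyond what (A1) and (A2) already record.
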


\begin{proof}
Fix $x,y\in X_{\kappa^+}$ such that $x\neq y$.
It follows from Lemma \ref{lem:D_x_is_hereditarily_without_middle_points} and Lemma \ref{lem:D_x_is_maximal} that $\tau_\kappa(x,X_{\kappa^+})=|D_x|$ and $\tau_\kappa(y,X_{\kappa^+})=|D_y|$.
If $r(x)=r(y)$, then, by (A2), we have $|D_x|\neq |D_y|$.
If $r(x)<r(y)$, then, by (A1), we have $|D_x|<|D_y|$.
\end{proof}

Finally, we obtain the main theorem of this paper.

\begin{theorem}
 $X_{\kappa^+}$ is a rigid $\kappa$-superuniversal metric space.
\end{theorem}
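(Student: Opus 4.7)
The plan is to combine two ingredients that have already been established. The $\kappa$-superuniversality half of the theorem is exactly the content of Theorem \ref{thm:X_kappa_is_kappa-superuniversal}, so there is nothing more to do on that side; only rigidity has to be argued. For rigidity I would take an arbitrary isometry $f\colon X_{\kappa^+}\to X_{\kappa^+}$ and show that $f$ must be $\id_{X_{\kappa^+}}$. Fix any $x\in X_{\kappa^+}$. By Lemma \ref{lem:isometry_preserves_tau} applied to $f$, the points $x$ and $f(x)$ have equal discrete character in $X_{\kappa^+}$, i.e.\@ $\tau_\kappa(x,X_{\kappa^+})=\tau_\kappa(f(x),X_{\kappa^+})$. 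But Lemma \ref{lem:different_tau} asserts that any two distinct points of $X_{\kappa^+}$ have different discrete characters. These two facts together force $f(x)=x$, and since $x$ was arbitrary, $f=\id_{X_{\kappa^+}}$, which is exactly rigidity.

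There is no genuine obstacle at this last step; the theorem is essentially a one-line packaging of the accumulated machinery. The real difficulty sits in Lemma \ref{lem:different_tau}, whose proof rests on the identification $\tau_\kappa(x,X_{\kappa^+})=|D_x|$ that one reads off from Lemmas \ref{lem:D_x_is_hereditarily_without_middle_points}, \ref{lem:every_hwmp_space_is_a_subset_of_some_D_x} and \ref{lem:D_x_is_maximal}, combined with the strictly monotone cardinal bookkeeping built into the operation $A$ via properties (A1) and (A2). Thus the only thing I need to write down here is the short deduction above.
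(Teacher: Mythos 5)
Your proof is correct and is essentially identical to the paper's: both deduce rigidity by combining Lemma \ref{lem:isometry_preserves_tau} (isometries preserve discrete character) with Lemma \ref{lem:different_tau} (distinct points have distinct discrete characters), and both cite Theorem \ref{thm:X_kappa_is_kappa-superuniversal} for $\kappa$-superuniversality. The only cosmetic difference is that the paper phrases the rigidity step as a proof by contradiction while you argue directly.
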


\begin{proof}
Suppose that there exists an isometry $f:X_{\kappa^+}\to X_{\kappa^+}$ such that $f(x)\neq x$ for some $x\in X_{\kappa^+}$.
It follows from Lemma \ref{lem:isometry_preserves_tau} that  $\tau_\kappa(x,X_{\kappa^+})=\tau_\kappa(f(x),X_{\kappa^+})$.
Lemma \ref{lem:different_tau} gives $\tau_\kappa(x,X_{\kappa^+})\neq \tau_\kappa(f(x),X_{\kappa^+})$, a contradiction.
Thus $X_{\kappa^+}$ is rigid.

Theorem \ref{thm:X_kappa_is_kappa-superuniversal} says that $X_{\kappa^+}$ is $\kappa$-superuniversal.
\end{proof}

\begin{corollary}
 If $\lambda>\kappa$ is a strongly inaccessible cardinal, then there exists a $\kappa$-superuniversal rigid metric space of cardinality $\lambda$.
\end{corollary}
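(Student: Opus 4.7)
The plan is to repeat the construction of Section 6 verbatim but iterate the recursion up to stage $\lambda$ in place of $\kappa^+$. Since $\lambda>\kappa$ is strongly inaccessible, I would first replace $\kappa$ by a regular cardinal $\kappa'$ with $\kappa\xleq\kappa'<\lambda$ satisfying the standing assumptions of Section 6 (for example $\kappa'=(2^{\kappa+\continuum})^+$, which lies below $\lambda$ because $\lambda$ is a strong limit); since any $\kappa'$-superuniversal space is automatically $\kappa$-superuniversal, it will then suffice to construct a rigid $\kappa'$-superuniversal space of cardinality $\lambda$.

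Next I would build the chain $\{X_\alpha:\alpha\xleq\lambda\}$ by the same recursion: take unions at limits and apply $F\circ S\circ A$ at even successor stages, so that the analogues of (P1)--(P6) hold with $\lambda$ in place of $\kappa^+$. A routine induction using strong inaccessibility will give $|X_\alpha|<\lambda$ for every $\alpha<\lambda$: if $|X_\alpha|<\lambda$, then $A$ contributes sets of size $\aleph_{\kappa'+|X_\alpha|+\xi+2}<\lambda$ (using that $\lambda$ is a fixed point of the aleph function), $S$ contributes at most $|X_\alpha|^{\kappa'}<\lambda$ new points, and $F$ contributes $(|X_\alpha|+\continuum)^{<\kappa'}<\lambda$ new points. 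Taking the union at stage $\lambda$ then yields $|X_\lambda|\xleq\lambda$, while the construction produces at least one new point at each of the $\lambda$ successor stages, so $|X_\lambda|=\lambda$.

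Once the cardinality is settled, $\kappa'$-superuniversality is checked exactly as in Theorem \ref{thm:X_kappa_is_kappa-superuniversal}: given $f_0:Y\setminus\{y\}\to X_\lambda$ with $|Y|<\kappa'<\lambda$, regularity of $\lambda$ yields $\alpha<\lambda$ with $f_0[Y\setminus\{y\}]\subseteq X_\alpha$, and then (i) of Theorem \ref{thm:F_extension} applied at stage $\alpha+3$ supplies the extension into $X_{\alpha+3}\subseteq X_\lambda$. Rigidity again reduces to showing that distinct points have distinct discrete characters, and the lemmas from Lemma \ref{lem:operations_A_and_S_do_not_add_middle_points} through Lemma \ref{lem:D_x_is_maximal} transfer verbatim with $\lambda$ in place of $\kappa^+$, yielding $\tau_{\kappa'}(x,X_\lambda)=|D_x|$ with pairwise distinct values by (A1)--(A2); Lemma \ref{lem:isometry_preserves_tau} then forces any isometry of $X_\lambda$ to be the identity.

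The main obstacle is the cardinal-arithmetic bookkeeping, together with a careful verification that the argument of Lemma \ref{lem:every_hwmp_space_is_a_subset_of_some_D_x} still produces a contradiction when $\kappa^+$ is replaced by $\lambda$ (one now counts at most $\lambda$ classes $\{D_x:r(x)=\alpha\}$ for $\alpha<\lambda$ rather than $\kappa^+$ many). Strong inaccessibility of $\lambda$ supplies exactly the closure under the cardinal operations that appear in $A$, $S$, and $F$, so no genuinely new difficulty beyond indexing arises.
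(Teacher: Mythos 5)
Your proposal takes essentially the same route as the paper: the paper's own proof of the corollary consists exactly of rerunning the recursion of Section~6 up to $\lambda$ and verifying by induction, using regularity of $\lambda$ at limits and the strong--limit property at successors, that $|X_\alpha|<\lambda$ for all $\alpha<\lambda$ (via the bounds $|F(X)|\xleq 2^{\kappa\cdot|X|}$, $|S(X,\calG)|\xleq 2^{\kappa\cdot|X|}$, $|A(X,Y)|\xleq\aleph_{\kappa+|X|\cdot 3}$), whence $|X_\lambda|=\lambda$. Your cardinal bookkeeping matches this, and your preliminary replacement of $\kappa$ by a suitable regular $\kappa'<\lambda$ satisfying the standing hypotheses of Section~6 is a point where you are actually more careful than the paper, which tacitly keeps those hypotheses on $\kappa$.

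The one substantive issue is the obstacle you name and then dismiss. The final step of Lemma~\ref{lem:every_hwmp_space_is_a_subset_of_some_D_x} derives its contradiction from $|\calK|>\kappa^+$ together with ``at most one $D_x\in\calK$ per rank, and there are only $\kappa^+$ ranks.'' With $\lambda$ ranks this gives only $|\calK|\xleq\lambda$, which does \emph{not} contradict $|\calK|>\kappa^+$; so the lemma does not transfer verbatim, and your parenthetical remark states the problem rather than resolving it. The repair is to show $|\calK|\xleq 1$ directly: the paper already excludes two members of $\calK$ of equal rank via (A3), and if $D_z,D_{z'}\in\calK$ with $r(z)<r(z')$, pick $u\in D_z\cap Y\setminus\{z,z'\}$ and $v\in D_{z'}\cap Y\setminus\{z'\}$ (possible since $|D_z\cap Y|=|D_{z'}\cap Y|=\kappa$); then $u$ already lies in the space to which $A$ was applied when $D_{z'}$ was attached, so (A3) yields $d(v,u)=1+d(z',u)>1$, contradicting discreteness of $Y$. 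With $|\calK|\xleq 1$ the second case of that proof gives $|Y|\xleq\kappa$, contradicting $|Y|>\kappa^+$, and the rigidity argument then proceeds as you describe. (The paper's own proof of the corollary is equally silent on this, so you are not missing anything the paper supplies; but having identified it as the main obstacle, you should actually close it.)
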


\begin{proof}
Observe that for a metric space $X$, a family $\calG\subseteq\calD(X)$ and a subset $Y\subseteq X$ we have
\[
|F(X)|\xleq \sum_{\mu<\kappa}(\continuum\cdot|X|)^\mu\xleq 2^\kappa\sum_{\mu<\kappa}|X|^\mu\xleq 2^{\kappa\cdot|X|}, 
\]
\[
|S(X,\calG)|\xleq |X|+|[X]^\kappa|\xleq 2^{\kappa\cdot|X|},
\]
\[
 |A(X,Y)|\xleq \aleph_{\kappa+|X|\cdot 3}.
\]
Fix $\alpha<\lambda$ and assume that $|X_\beta|<\lambda$ for every $\beta<\alpha$.
If $\alpha$ is limit, then $|\bigcup_{\beta<\alpha}X_\beta|<\lambda$, since $\lambda$ is regular.
If $\alpha=\beta+2$, then
\[
|A(X_{\beta+1},X_{\beta})|\xleq \aleph_{\kappa+|X_{\beta+1}|\cdot 3}<\lambda 
\]
\[
 |S(A(X_{\beta+1},X_{\beta}),\bigcup_{\gamma\xleq \beta}\calA(X_{\gamma+1},X_\gamma))|\xleq 2^{\kappa\cdot |A(X_{\beta+1},X_{\beta})|}<\lambda.
\]
Analogously $|F(S(A(X_{\beta+1},X_{\beta}),\bigcup_{\gamma\xleq \beta}\calA(X_{\gamma+1},X_\gamma)))|<\lambda$.
Thus $|X_\lambda|=\lambda$.
\end{proof}

\end{document}